\renewcommand{\d}{\mathrm{d}}
\newcommand{\dx}{\mathrm{d}x}
\newcommand{\dy}{\mathrm{d}y}
\newcommand{\dt}{\mathrm{d}t}
\newcommand{\ds}{\mathrm{d}s}
\renewcommand{\rho}{\varrho}
\let\TeXchi\chi
\newbox\chibox
\chibox \hbox{\raise\dp0 \box 0 }
\def\chi{\copy\chibox}
\def\Xint#1{\mathchoice
    {\XXint\displaystyle\textstyle{#1}}%
    {\XXint\textstyle\scriptstyle{#1}}%
    {\XXint\scriptstyle\scriptscriptstyle{#1}}%
    {\XXint\scriptscriptstyle\scriptscriptstyle{#1}}%
    \!\int}
\def\XXint#1#2#3{\setbox0=\hbox{$#1{#2#3}{\int}$}
    \vcenter{\hbox{$#2#3$}}\kern-0.5\wd0}
\def\bint{\Xint-}
\def\dashint{\Xint{\raise4pt\hbox to7pt{\hrulefill}}}
\def\XXiint#1#2#3{\setbox0=\hbox{$#1{#2#3}{\iint}$}
    \vcenter{\hbox{$#2#3$}}\kern-0.5\wd0}
\author[N. Liao]{Naian Liao}
\address{Naian Liao,
Fachbereich Mathematik, Universit\"at Salzburg,
Hellbrunner Str. 34, 5020 Salzburg, Austria}
\email{naian.liao@sbg.ac.at}
\newtheorem{proposition}{Proposition}[section]
\newtheorem{theorem}{Theorem}[section]
\newtheorem{lemma}{Lemma}[section]
\newtheorem{corollary}{Corollary}[section]
\newtheorem{remark}{Remark}[section]
\numberwithin{equation}{section}
\numberwithin{theorem}{section}
\numberwithin{proposition}{section}
\numberwithin{lemma}{section}
\numberwithin{remark}{section}
\newcommand{\dsty}{\displaystyle}
\newcommand{\al}{\alpha}
\newcommand{\be}{\beta}
\newcommand{\gm}{\gamma}
\newcommand{\dl}{\delta}
\newcommand{\lm}{\lambda}
\newcommand{\varep}{\varepsilon}
\newcommand{\vp}{\varphi}
\newcommand{\sig}{\sigma}
\newcommand{\om}{\omega}
\newcommand{\z}{\zeta}
\newcommand{\nn}{\mathbb{N}}
\newcommand{\rr}{\mathbb{R}}
\newcommand{\rn}{\rr^N}
\newcommand{\supp}{\operatorname{supp}}
\newcommand{\essup}{\operatornamewithlimits{ess\,sup}}
\newcommand{\essinf}{\operatornamewithlimits{ess\,inf}}
\newcommand{\essosc}{\operatornamewithlimits{ess\,osc}}
\newcommand{\loc}{\operatorname{loc}}
\newcommand{\pl}{\partial}
\newcommand{\intl}{\int\limits}
\def\Xint#1{\mathchoice
    {\XXint\displaystyle\textstyle{#1}}%
    {\XXint\textstyle\scriptstyle{#1}}%
    {\XXint\scriptstyle\scriptscriptstyle{#1}}%
    {\XXint\scriptscriptstyle\scriptscriptstyle{#1}}%
    \!\int}
\def\XXint#1#2#3{\setbox0=\hbox{$#1{#2#3}{\int}$}
    \vcenter{\hbox{$#2#3$}}\kern-0.5\wd0}
\def\bint{\Xint-}
\def\dashint{\Xint{\raise4pt\hbox to7pt{\hrulefill}}}
\def\dashiint{\bint\kern-0.15cm\bint}
\newcommand{\ovl}[3]{\int_{#1}^{#2}\kern-#3pt\raise4pt\hbox to7pt{\hrulefill}\ }
\newcommand{\ovll}[3]{\intl_{#1}^{#2}\kern-#3pt\raise4pt\hbox to7pt{\hrulefill}\ }
\newcommand{\tvl}[2]{\iint_{#1}\kern-#2pt\raise4pt\hbox to7pt{\hrulefill}\ }
\newcommand{\bye}{
\begin{document}
\title[H\"older regularity for parabolic fractional $p$-Laplacian]{H\"older regularity \\ for parabolic fractional $p$-Laplacian}
\maketitle
\begin{abstract}
Local H\"older regularity is established for certain weak solutions to a class of parabolic fractional $p$-Laplace equations with merely measurable kernels. The proof uses DeGiorgi's iteration and refines DiBenedetto's intrinsic scaling method. The control of a nonlocal integral of solutions in the reduction of oscillation plays a crucial role and entails delicate analysis in this intrinsic scaling scenario. Dispensing with any logarithmic estimate and any comparison principle, the proof is new even for the linear case.
\vskip.2truecm
\noindent{\bf Mathematics Subject Classification (2020):} 35R11, 35K65, 35B65, 47G20

\vskip.2truecm
\noindent{\bf Key Words:} H\"older regularity,  parabolic $p$-Laplacian, nonlocal operators, intrinsic scaling
\end{abstract}
\section{Introduction}

We are interested in the local H\"older regularity of weak solutions to
a class of  parabolic equations involving a fractional $p$-Laplacian type operator:
\begin{equation}\label{Eq:1:1}
\pl_t u + \mathscr{L} u=0\quad\text{weakly in}\> E_T,
\end{equation}
where $E_T=E\times(0,T]$ for some open set $E\subset\rn$ and some $T>0$, and the nonlocal operator $\mathscr{L}$ is defined by
\begin{equation}\label{Eq:1:2}
\mathscr{L}u(x,t)={\rm P.V.}\int_{\rn} K(x,y,t)\big|u(x,t) - u(y,t)\big|^{p-2} \big(u(x,t) - u(y,t)\big)\,\dy,
\end{equation}
for some $p>1$.
Here ${\rm P.V.}$ denotes the principle value of the integral, whereas the kernel $K:\rn\times\rn\times(0,T]\to [0,\infty)$ is a measurable function satisfying the following condition uniformly in $t$:
\begin{equation}\label{Eq:K}
\frac{C_o}{|x-y|^{N+sp}}\le K(x,y,t)\equiv K(y,x,t)\le \frac{C_1}{|x-y|^{N+sp}}\quad\text{a.e.}\> x,\,y\in\rn,
\end{equation}
for some positive $C_o$, $C_1$ and $s\in(0,1)$. 

Throughout this note,  the parameters $\{s, p, N, C_o, C_1\}$ are termed the {\it data} and we use $\boldsymbol\gm$ as a generic positive constant in various estimates that can be determined by the data only.

The formal definition of weak solution to \eqref{Eq:1:1} -- \eqref{Eq:K} and notations can be found in Section~\ref{S:notion}. 
We proceed to present our main result as follows.

\begin{theorem}\label{Thm:1:1}
Let $u$ be a locally bounded, local, weak solution to \eqref{Eq:1:1} -- \eqref{Eq:K} in $E_T$ with $p>1$.
	Then $u$ is locally H\"older continuous in $E_T$. More precisely,
	there exist constants $\boldsymbol\gm>1$ and $\be\in(0,1)$ that can be determined a priori
	only in terms of the data, such that for any $0<\rho<R<\widetilde{R}$, there holds
	\begin{equation*}
	\essosc_{(x_o,t_o)+Q_{\rho}(\boldsymbol\om^{2-p})} u\le \boldsymbol\gm \boldsymbol\om \Big(\frac{\rho}{R}\Big)^{\be},
	\end{equation*}
provided the  cylinders  $(x_o,t_o)+Q_{R}(\boldsymbol\om^{2-p})\subset (x_o,t_o)+Q_{\widetilde{R}}$  
are included in $E_T$, 
where
$$\boldsymbol\om=2\essup_{(x_o,t_o)+Q_{\widetilde{R}}}|u| +{\rm Tail}\big(u; (x_o,t_o)+Q_{\widetilde{R}}\big).$$
\end{theorem}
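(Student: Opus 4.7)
The plan is to establish Theorem~\ref{Thm:1:1} by an iterative \emph{reduction of oscillation} on the intrinsic cylinders $(x_o,t_o)+Q_\rho(\boldsymbol\om^{2-p})$, in the spirit of DiBenedetto but with the nonlocal operator $\mathscr{L}$ replacing the local $p$-Laplacian. Concretely, starting from the trivial bound $\essosc_{Q_R(\boldsymbol\om^{2-p})}u\le\boldsymbol\om$, one produces constants $\eta,\lambda\in(0,1)$ depending only on the data such that $\essosc_{Q_{\lambda R}(\boldsymbol\om_1^{2-p})}u\le\boldsymbol\om_1$ with $\boldsymbol\om_1=\eta\boldsymbol\om$, and checks geometrically that the new intrinsic cylinder still sits inside the old one. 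Iterating this one step yields the claimed H\"older exponent $\beta=\log\eta/\log\lambda$ and constant $\boldsymbol\gm$.

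The first tool is a family of energy (Caccioppoli-type) inequalities for the truncations $(u-k)_\pm$ against cut-offs $\zeta$ inside the intrinsic cylinders, controlling the supremum-in-time of $\int(u-k)_\pm^2\zeta^p$ and the Gagliardo-type $p$-energy by local $L^p$ terms of $(u-k)_\pm$ together with a single nonlocal \emph{tail functional} that is a priori bounded by $\boldsymbol\om^{p-1}$. From this one derives a DeGiorgi-type lemma: if on a small intrinsic subcylinder the measure of the super-level set $\{u>\mu^+-\boldsymbol\om/4\}$ is below a universal threshold $\nu_o$, then on the half-scale cylinder $u\le\mu^+-\boldsymbol\om/8$, where $\mu^\pm$ denote the essential supremum and infimum of $u$ on the reference cylinder; a symmetric statement holds for the sub-level set.

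To feed the DeGiorgi lemma, a \emph{measure propagation in time} step is required: one first observes that, after reducing by symmetry to the upper half of the oscillation range, there is at least one time slice on which the super-level set has small measure, and then propagates that smallness forward in time throughout the intrinsic cylinder purely by iterating the Caccioppoli estimate across adjacent time slabs, entirely avoiding logarithmic test functions and comparison principles. This is where the principal obstacle lies: the intrinsic time scale $\boldsymbol\om^{2-p}\rho^{sp}$ and the natural homogeneity of the nonlocal tail $\int|u(y)|^{p-1}|y|^{-N-sp}\,\d y$ do not align when $p\neq 2$, so the nonlocal contribution in the energy inequality must be tamed either by a dichotomy (when the tail dominates $\boldsymbol\om$ the desired oscillation bound is automatic, while when $\boldsymbol\om$ dominates the tail the usual iteration proceeds) or by a uniform-in-time control of the tail along the iteration. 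Designing the cut-offs and level structure so that the tail can always be absorbed into the current step's $\boldsymbol\om$ is the delicate analysis flagged in the abstract, and I expect it to consume the bulk of the technical effort.

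Once one round of reduction of oscillation is secured, the iteration proceeds by choosing $\lambda$ so that the intrinsic cylinders nest correctly: for $1<p\le 2$ the inclusion $Q_{\lambda R}(\boldsymbol\om_1^{2-p})\subset Q_R(\boldsymbol\om^{2-p})$ is automatic from $\eta,\lambda<1$, while for $p>2$ one needs $\lambda^{sp}\le\eta^{p-2}$, which is ensured by picking $\lambda$ sufficiently small depending on $\eta$ and the data. Iterating $\boldsymbol\om_k=\eta^k\boldsymbol\om$ on $\rho_k=\lambda^k R$ and interpolating across consecutive scales then produces $\essosc_{(x_o,t_o)+Q_\rho(\boldsymbol\om^{2-p})}u\le\boldsymbol\gm\boldsymbol\om(\rho/R)^\beta$ for every $0<\rho<R$, which is exactly the assertion of the theorem.
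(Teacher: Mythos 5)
Your overall scheme---intrinsic-scaling reduction of oscillation driven by Caccioppoli estimates and a DeGiorgi lemma, avoiding logarithmic estimates---matches the paper's strategy, but the step you explicitly defer is the theorem, and the mechanism you sketch for it does not work. The dichotomy ``when the tail dominates $\boldsymbol\om$ the desired oscillation bound is automatic'' is false: at step $j$ the relevant quantity is ${\rm Tail}\big((u-\boldsymbol\mu_j^{\pm})_{\pm};Q_j\big)$, and its being large gives no information whatsoever about $\essosc u$ on the next cylinder. The paper's either--or lemmas (Lemmas~\ref{Lm:DG:1}--\ref{Lm:3:2}) are resolved not by accepting the tail alternative but by \emph{ruling it out at every step}: one proves the inductive tail estimate \eqref{Eq:tail-est}, ${\rm Tail}((u-\boldsymbol\mu_j^{-})_{-};Q_j)\le\boldsymbol\gm\boldsymbol\om_j$, by decomposing $\rn\setminus K_j$ into the annuli $K_{i-1}\setminus K_i$, using the full history of oscillation bounds $\essosc_{Q_{i-1}}u\le\boldsymbol\om_{i-1}$ to bound $(u-\boldsymbol\mu_j^-)_-\le\boldsymbol\om_{i-1}$ there, and summing the resulting series; the series converges only under the coupling $(1-\eta)^{1-p}\lm^{sp}\le\tfrac12$ between the spatial shrinking factor and the oscillation decrement, after which the scaling factor $c$ is chosen small (depending on $\eta$, hence on the data) so that $c^{sp/(p-1)}\boldsymbol\gm\boldsymbol\om_j\le\eta\boldsymbol\om_j$ kills the tail alternative uniformly in $j$. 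Your ``a priori bounded by $\boldsymbol\om^{p-1}$'' is true only at the zeroth step; keeping it true along the iteration is exactly the induction loop $\eqref{Eq:osc-intro}\leftrightarrows\eqref{Eq:tail-intro}$, and nothing in your proposal supplies it.

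Two further gaps. First, your measure-propagation step (``propagate smallness forward in time purely by iterating the Caccioppoli estimate across adjacent time slabs'') conflates two distinct modules: propagation of \emph{pointwise/measure positivity} forward in time (Lemma~\ref{Lm:3:1}, a standard Caccioppoli argument) and the \emph{measure-shrinking} step (Lemma~\ref{Lm:3:2}), which in the nonlocal setting is obtained from the mixed positive/negative truncation term $w_\pm w_\mp^{p-1}|x-y|^{-N-sp}$ in Proposition~\ref{Prop:2:1}; without some replacement for this (in the local theory one uses logarithmic estimates, which you have excluded) the critical-mass hypothesis of the DeGiorgi lemma cannot be reached. Second, the case $p>2$ is not merely a nesting condition $\lm^{sp}\le\eta^{p-2}$: the expansion-of-positivity route (Proposition~\ref{Prop:expansion}) uses $\dl(\sig\varep\xi\boldsymbol\om)^{2-p}\le\dl(\xi\boldsymbol\om)^{2-p}$, valid only for $p\le2$, so for $p>2$ the paper runs a genuinely different two-alternative argument in cylinders $Q_R(a\theta)$ with an extra intrinsic parameter $a$ fixed through \eqref{Eq:a}, together with the quantitative initial-data lemma (Lemma~\ref{Lm:DG:initial:1}) to stretch positivity over the full intrinsic time height. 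As it stands your proposal is a correct outline of the known roadmap with the decisive tail-versus-intrinsic-geometry analysis, the measure-shrinking device, and the degenerate-case architecture left unproved.
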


\begin{remark}\upshape
  Theorem~\ref{Thm:1:1} has been formulated independent of any initial/boundary data. While local,  the oscillation estimate bears global information via the {\it tail} of $u$; see~\eqref{Eq:tail}. In particular, a solution  is allowed to grow at infinity. Whereas if $u$ is globally bounded in $\rn\times(0,T)$, then   $\boldsymbol\om$ can be taken as the global bound. This occurs if, for instance, proper initial/boundary data are prescribed, cf.~\cite{BLS, Caff-Vass-11}. In addition, if $u$ is globally bounded in $\rn\times(-\infty,T)$, then $u$ is,  {\it a fortiori}, a constant by the oscillation estimate.
\end{remark}
\begin{remark}\upshape
  Theorem~\ref{Thm:1:1} continues to hold for more general structures. For instance, one can consider the kernel satisfying
 \[
  \frac{C_o \chi_{\{|x-y|\le 1\}}}{|x-y|^{N+sp}}\le K(x,y,t)\equiv K(y,x,t)\le \frac{C_1}{|x-y|^{N+sp}}\quad\text{a.e.}\> x,\,y\in\rn.
 \]
 Also, proper lower order terms can be considered, cf.~\cite{Cozzi}.
However, we will concentrate on the actual novelty and  leave possible generalizations to the motivated reader.
\end{remark}
\subsection{Novelty and Significance}
The nonlocal elliptic operator $\mathscr{L}$ as in \eqref{Eq:1:2} with a kernel like \eqref{Eq:K}, especially when $p=2$, has been a classical topic in Probability, Potential theory, Harmonic Analysis, etc. In addition, nonlocal partial differential equations arise from continuum mechanics and phase transition, from population dynamics, and from optimal control and game theory. We refer to \cite{Caff-12, Hitchhiker} for a source of motivations and applications.

Local regularity for the nonlocal elliptic operator with merely measurable kernels is well studied, cf.~ \cite{Cozzi, DKP-1, DKP-2, Kass-09, Silv-06}  -- just to mention a few.  In \cite{DKP-1, DKP-2}, localization techniques are developed  in order to establish H\"older regularity and Harnack's inequality for the elliptic operator. A logarithmic estimate plays a key role in \cite{DKP-1, DKP-2}. Whereas \cite{Cozzi} further improves these results to functions in certain DeGiorgi classes and the logarithmic estimate is dispensed with. 

The parabolic nonlocal problem \eqref{Eq:1:1} has witnessed a growing interest recently; 
see \cite{BGK-1, BGK, BLS, Caff-Vass-11, DZZ, Kass-13, Kim-0, Kim-1, Mazon-16, Stromqvist-1, Stromqvist-2, Vaz-16, Vaz-20} -- just to mention a few.
Coming to the local regularity, while the case $p=2$ has been subject to extensive studies, 
the case $p\neq2$ is largely open. Local regularity of supersolutions is studied in \cite{BGK-1, BGK}. Some local boundedness estimates are reported in \cite{DZZ, Stromqvist-1}. Meanwhile, it is tried in \cite{DZZ} to adapt techniques of \cite{DKP-1} and to show H\"older regularity for \eqref{Eq:1:1} with $p>2$. However, we are unable to verify (5.23), which is based on a logarithmic estimate. 
When the kernel $K(x,y,t)$ is exactly $ 2|x-y|^{-N-sp}$ for $p\ge2$, explicit H\"older exponents are obtained in \cite{BLS}.

Our contribution lies in establishing H\"older regularity for the parabolic fractional $p$-Laplace type equation with merely measurable kernels for all $p>1$. The approach is structural, in the sense that we dispense with any kind of comparison principle. More generally, we find that the H\"older regularity is in fact encoded in a family of energy estimates in Corollary~\ref{Cor:1} and tools like logarithmic estimates or exponential change of variables play no role in our arguments.\footnote{After completion of the paper, we noticed a similar result in the recent preprint \cite{Adi}, via exponential change of variables. } As such, the arguments are new even for $p=2$ and they hold the promise of a wider applicability, for instance, in Calculus of Variations. 

Unlike the elliptic operator or the parabolic operator with $p=2$, the local behavior of a solution to the parabolic $p$-Laplacian is markedly different: it has to be read in its own {\it intrinsic geometry}. This is the guiding idea in the local operator theory, cf.~\cite{ChenDB92, DB, DB86, DBGV-mono, Urbano-08}. In terms of oscillation estimates, this idea leads to the construction of geometric sequences $\{R_i\}$ and $\{\boldsymbol\om_{i}\}$ connected by the intrinsic relation
\begin{equation}\label{Eq:osc-intro}
\begin{array}{c}
 \dsty 
 \essosc_{Q_{R_{i}}(\boldsymbol\om_{i}^{2-p})}u\le\boldsymbol\om_{i}.
\end{array}
\end{equation}
The nonlocal theory developed here is no exception. However, the nonlocal character of \eqref{Eq:1:1} needs to be carefully handled in this intrinsic scaling scenario. A new component brought by the nonlocality of the operator is a proper control of the so-called {\it tail} -- a nonlocal integral of the solution (see~\eqref{Eq:tail}). Precisely, we have
\begin{equation}\label{Eq:tail-intro}
{\rm Tail}\big(\big(u - \boldsymbol \mu_i^{\pm}\big)_{\pm}; Q_{R_{i}}(\boldsymbol\om_{i}^{2-p})\big)  \le \boldsymbol\gm\boldsymbol\om_i,
\end{equation}
where $ \boldsymbol \mu_i^{\pm}$ denotes the supremum/infimum of $u$ over $Q_{R_{i}}(\boldsymbol\om_{i}^{2-p})$. In other words, the nonlocal tail is controlled by the local oscillation, if the intrinsic relation \eqref{Eq:osc-intro} is verified. The tail estimate \eqref{Eq:tail-intro} in turn allows us to reduce the oscillation in the next step, and so on. This induction procedure can be illustrated by 
$$\eqref{Eq:osc-intro}_{i}^{i+1}\> \leftrightarrows \> \eqref{Eq:tail-intro}_{i}.$$

The local regularity theory for the nonlocal parabolic problem \eqref{Eq:1:1} with $p\neq2$ is still at its inception. We believe the techniques developed in this note are flexible enough and provide a handy toolkit that can be used to fruitfully attack more general nonlocal parabolic equations. 
\subsection{Definitions and Notations}\label{S:notion}
\subsubsection{Function Spaces}
For $p>1$ and $s\in(0,1)$, we introduce the fractional Sobolev space $W^{s,p}(\rn)$ by
\[
W^{s,p}(\rn):=\bigg\{v \in L^p(\rn):\, \int_{\rn}\int_{\rn}\frac{|v(x) - v(y)|^p}{|x-y|^{N+sp}}\,\dx\dy<\infty\bigg\},
\]
which is endowed with the norm
\[
\|v\|_{W^{s,p}(\rn)}:=\bigg(\int_{\rn}|v|^p\,\dx\bigg)^{\frac1p} + \bigg(\int_{\rn}\int_{\rn}\frac{|v(x)-v(y)|^p}{|x-y|^{N+sp}}\,\dx\dy\bigg)^{\frac1p}.
\]
Similarly, the fractional Sobolev space $W^{s,p}(E)$ for a domain $E\subset\rn$ can be defined. Moreover, we denote
\[
W^{s,p}_o(E):=\bigg\{v\in W^{s,p}(\rn):\, v=0\>\>\text{a.e. in}\>\rn\setminus E \bigg\}.
\]
These spaces admit imbedding into proper Lebesgue spaces; we collect some in Appendix~\ref{A:1}.

\subsubsection{Notion of Weak Solution}
A function
\begin{equation*} 
	u\in C_{\loc}\big(0,T;L^2_{\loc}(E)\big)\cap L^p_{\loc}\big(0,T; W^{s,p}_{\loc}(E)\big)
\end{equation*}
is a local, weak sub(super)-solution to \eqref{Eq:1:1} -- \eqref{Eq:K}, if for every compact set $\mathcal{K}\subset E$ and every sub-interval
$[t_1,t_2]\subset (0,T]$, we have
\begin{equation}\label{Eq:global-int}
\essup_{t_1<t<t_2}\int_{\rn}\frac{|u(x,t)|^{p-1}}{1+|x|^{N+sp}}\,\dx<\infty
\end{equation}
and
\begin{equation}  \label{Eq:1:4p}
\begin{aligned}
	\int_{\mathcal{K}} & u\vp \,\dx\bigg|_{t_1}^{t_2}
	-\int_{t_1}^{t_2}\int_{\mathcal{K}}  u\pl_t\vp\, \dx\dt
	+\int_{t_1}^{t_2} \mathscr{E}\big(u(\cdot, t), \vp(\cdot, t)\big)\,\dt
	\le(\ge)0
\end{aligned}
\end{equation}
where
\[
	 \mathscr{E}:=\int_{\rn}\int_{\rn}K(x,y,t)\big|u(x,t) - u(y,t)\big|^{p-2} \big(u(x,t) - u(y,t)\big)\big(\vp(x,t) - \vp(y,t)\big)\,\dy\dx
\]
for all non-negative testing functions
\begin{equation}\label{Eq:test-function}
\vp \in W^{1,2}_{\loc}\big(0,T;L^2(\mathcal{K})\big)\cap L^p_{\loc}\big(0,T;W_o^{s,p}(\mathcal{K})%
\big).
\end{equation}

A function $u$ that is both a local weak sub-solution and a local weak super-solution
to \eqref{Eq:1:1} -- \eqref{Eq:K} is a local weak solution.

\begin{remark}\upshape
As we are developing a local theory, the function space in \eqref{Eq:test-function} can be taken smaller. Namely, the notion of $W_o^{s,p}(\mathcal{K})$ can be replaced by functions $\vp(\cdot, t)\in W^{s,p}(\rn)$ with a compact support in $\mathcal{K}$ for a.e. $t$. 
\end{remark}

\begin{remark}\upshape
To ensure the convergence of the global integral in \eqref{Eq:1:4p}, it suffices to weaken the $L^\infty$ norm appearing in the condition \eqref{Eq:global-int} by the $L^1$ norm. However,  in deriving the energy estimate of Proposition~\ref{Prop:2:1}, the condition \eqref{Eq:global-int} is needed already.
\end{remark}

\subsubsection{Some Notations}
Throughout this note, 
we will use $K_\rho(x_o)$ to denote the ball of radius $\rho$ and center $x_o$ in $\rn$, and the symbols 
\begin{equation*}
\left\{
\begin{aligned}
(x_o,t_o)+Q_\rho(\theta)&:=K_{\rho}(x_o)\times(t_o-\theta\rho^{sp},t_o),\\[5pt]
(x_o,t_o)+Q(R,S)&:=K_R(x_o)\times (t_o-S,t_o),
\end{aligned}\right.
\end{equation*} 
to denote (backward) cylinders with the indicated positive parameters.
When the context is unambiguous, we will omit the vertex $(x_o,t_o)$ from the symbols for simplicity.
When $\theta=1$, it is also omitted.

A nonlocal integral of $u$ -- termed the {\it tail } of $u$ -- inevitably appears in the theory, which we define as
\begin{equation}\label{Eq:tail}
{\rm Tail}\big(u; Q(R,S)\big):= \essup_{t_o-S<t<t_o}\bigg(R^{sp}\int_{\rn\setminus K_R(x_o)}\frac{|u(x,t)|^{p-1}}{|x-x_o|^{N+sp}}\,\dx \bigg)^{\frac1{p-1}}.
\end{equation}
For any $Q(R,S)\subset E_T$, the finiteness of this tail is guaranteed by \eqref{Eq:global-int}.


\section{Energy Estimates}\label{S:energy}
This section is devoted to energy estimates satisfied by local weak sub(super)-solutions to \eqref{Eq:1:1} -- \eqref{Eq:K}.
We first introduce,
 for any $k\in\rr$, the truncated functions
\[
(u-k)_+=\max\{u-k,0\}, \qquad (u-k)_-=\max\{-(u-k),0\}.
\]

In what follows, when we state {\it ``$u$ is a sub(super)-solution...''}
and use $``\pm"$ or $``\mp"$ in what follows, we mean the sub-solution corresponds to
the upper sign and the super-solution corresponds to the lower sign in the statement.

\begin{proposition}\label{Prop:2:1}
	Let $u$ be a  local weak sub(super)-solution to \eqref{Eq:1:1} -- \eqref{Eq:K} in $E_T$.
	There exists a constant $\boldsymbol \gm (C_o,C_1,p)>0$, such that
 	for all cylinders $Q(R,S) \subset E_T$,
 	every $k\in\rr$, and every non-negative, piecewise smooth cutoff function
 	$\z(\cdot,t)$ compactly supported in $ K_{R} $ for all  $t\in (t_o-S,t_o)$,  there holds
\begin{align*}
	&\int_{t_o-S}^{t_o}\int_{K_R}\int_{K_R}\min\big\{\z^p(x,t),\z^p(y,t)\big\} \frac{|w_\pm(x,t) - w_\pm(y,t)|^p}{|x-y|^{N+sp}}\,\dx\dy\dt\\
	&\qquad+\iint_{Q(R,S)} \z^p w_{\pm}(x,t)\,\dx\dt \bigg(\int_{ K_R}  \frac{w^{p-1}_\mp(y,t)}{|x-y|^{N+sp}}\,\dy\bigg)
	 +\int_{K_R}\z^p w^2_{\pm}(x,t)\,\dx\bigg|_{t_o-S}^{t_o}\\
	&\quad\le
	\boldsymbol\gm\int_{t_o-S}^{t_o}\int_{K_R}\int_{K_R}\max\big\{w^p_{\pm}(x,t), w^p_{\pm}(y,t)\big\} \frac{|\z(x,t) - \z(y,t)|^p}{|x-y|^{N+sp}}\,\dx\dy\dt\\
	&\qquad+\boldsymbol\gm\iint_{Q(R,S)} \z^pw_{\pm}(x,t)\,\dx\dt \bigg(\essup_{\substack{x\in\supp\z(\cdot, t)\\t\in(t_o-S,t_o)}} \int_{\rn\setminus K_R}\frac{ w_{\pm}^{p-1}(y,t)}{|x-y|^{N+sp}}\,\dy\bigg)\\
	&\qquad+ \iint_{Q(R,S)} |\pl_t\z^p|w_{\pm}^2(x,t)\,\dx\dt.  
\end{align*}
Here, we have denoted $w=u-k$ for simplicity.
\end{proposition}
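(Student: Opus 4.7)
The plan is to test the weak formulation \eqref{Eq:1:4p} against $\vp=\zeta^p(u-k)_\pm=\zeta^p w_\pm$, matching the two signs in the statement. Since this $\vp$ lacks the required time differentiability, I would first mollify $u$ in time (for instance, by Steklov averaging) to justify the chain rule and then pass to the limit; this routine step turns the parabolic portion of \eqref{Eq:1:4p} into
\[
-\iint_{Q(R,S)}\!\!u\,\pl_t\vp\,\dx\dt+\int_{K_R}u\vp\,\dx\bigg|_{t_o-S}^{t_o}=\tfrac{1}{2}\int_{K_R}\zeta^p w_\pm^2\,\dx\bigg|_{t_o-S}^{t_o}-\tfrac{1}{2}\iint_{Q(R,S)}|\pl_t\zeta^p|w_\pm^2\,\dx\dt,
\]
by the identity $\pl_t w_\pm\cdot w_\pm=\tfrac12\pl_t w_\pm^2$ and the fact that $\pl_t w_\pm$ vanishes on $\{w_\mp>0\}$. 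This directly supplies the third term on the left and the last term on the right of the stated estimate.

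Next I would decompose the elliptic form $\mathscr{E}(u,\vp)$ into three pieces: the near-diagonal integral over $K_R\times K_R$; the two mixed pieces $K_R\times(\rn\setminus K_R)$ and $(\rn\setminus K_R)\times K_R$, which coincide by the symmetry of $K$; and the far integral over $(\rn\setminus K_R)\times(\rn\setminus K_R)$, on which the integrand vanishes because $\vp(\cdot,t)\equiv 0$ off $K_R$. The tail hypothesis \eqref{Eq:global-int} guarantees that the mixed piece converges absolutely.

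On the near-diagonal piece I would invoke the standard pointwise algebraic inequality (of Di Castro--Kuusi--Palatucci / Cozzi type): for all $a,b\in\rr$ and $\tau_1,\tau_2\ge0$,
\[
\begin{aligned}
|a-b|^{p-2}(a-b)\bigl(\tau_1^p a_\pm-\tau_2^p b_\pm\bigr)&\ge c_p\min(\tau_1,\tau_2)^p|a_\pm-b_\pm|^p\\
&\quad+c_p\bigl(\tau_1^p a_\pm b_\mp^{p-1}+\tau_2^p b_\pm a_\mp^{p-1}\bigr)-C_p\max(a_\pm^p,b_\pm^p)|\tau_1-\tau_2|^p,
\end{aligned}
\]
specialised to $a=u(x,t)-k$, $b=u(y,t)-k$, $\tau_1=\zeta(x,t)$, $\tau_2=\zeta(y,t)$. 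After multiplying by $K(x,y,t)$ and integrating, the lower bound $K\ge C_o|x-y|^{-N-sp}$ combined with the first term yields the Gagliardo quantity on the left of the conclusion; the cross-sign terms---active exactly when $u(x)$ and $u(y)$ straddle the level $k$---deliver the second integral on the left; and the last term, paired with the upper bound $K\le C_1|x-y|^{-N-sp}$, produces the $|\zeta(x)-\zeta(y)|^p$ term on the right.

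For the long-range piece, at $(x,y)$ with $x\in\supp\zeta(\cdot,t)$ and $y\notin K_R$ the identity $\vp(y,t)=0$ reduces the integrand to $K(x,y,t)|u(x)-u(y)|^{p-2}(u(x)-u(y))\zeta^p(x)w_\pm(x)$. On $\{w_\pm(x)>0\}$, the value $u(x)$ lies on the correct side of $k$, so monotonicity of $\xi\mapsto|\xi|^{p-2}\xi$ together with the pointwise bound $(u(y)-u(x))_\pm\le w_\pm(y)$ gives $|u(x)-u(y)|^{p-2}(u(x)-u(y))\ge -w_\pm^{p-1}(y)$; integrating in $y$ against $K\le C_1|x-y|^{-N-sp}$ and taking an $\esssup$ in $x$ yields precisely the second term on the right. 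Collecting the four contributions closes the argument. I expect the main obstacle to be the algebraic inequality on the near-diagonal piece: its verification requires a case analysis in the signs of $a$ and $b$ and is genuinely different in the two regimes $p\ge 2$ and $1<p<2$, and the cross-sign contribution must be carefully retained rather than discarded, since it is exactly this positive quantity on the left that will later be used to control the nonlocal tail in the DeGiorgi-type iteration.
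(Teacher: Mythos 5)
Your proposal is correct and follows essentially the same route as the paper: test with $\vp=\z^p w_\pm$ after a time mollification, split the nonlocal form into the $K_R\times K_R$ part and the mixed part (the far-field part vanishing by the support of $\z$), retain the cross-sign term from the straddling configuration, and bound the long-range contribution by $(u(y)-k)_\pm^{p-1}w_\pm(x)$ exactly as in the paper's treatment of $I_2$. The only difference is presentational: you quote the combined Di Castro--Kuusi--Palatucci/Cozzi-type pointwise inequality, whereas the paper verifies the needed cases directly from the elementary inequality $a^p-b^p\le\varep a^p+p^p\varep^{1-p}(a-b)^p$.
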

\begin{proof}
We will only deal with the case of sub-solution as the other case is similar. Using $\vp=w_+\z^p$ as a testing function in the weak formulation modulo a proper time mollification  (cf.~Appendix~\ref{A:2}), the last terms on the right/left-hand side of the energy estimate are rather standard. We only treat the integral resulting from the fractional diffusion part, which, due to the support of $\z$ and symmetry of the integrand, can be split into two parts, that is,
\begin{align*}
&\int_{t_o-S}^{t_o}\int_{\rn}\int_{\rn} \big|u(x,t) - u(y,t)\big|^{p-2}\big(u(x,t) - u(y,t)\big)\big(\vp(x,t) - \vp(y,t)\big)\,\d\mu\\
&\quad=\int_{t_o-S}^{t_o}\int_{K_R}\int_{K_R} \big|u(x,t) - u(y,t)\big|^{p-2}\big(u(x,t) - u(y,t)\big)\big(\vp(x,t) - \vp(y,t)\big)\,\d\mu\\
&\qquad+2\int_{t_o-S}^{t_o}\int_{K_{R}}\int_{\rn\setminus K_R} \big|u(x,t) - u(y,t)\big|^{p-2}\big(u(x,t) - u(y,t)\big) \vp(x,t)  \,\d\mu\\
&\quad=:I_1+I_2,
\end{align*}
where we have used $\d\mu=K(x,y,t)\dy\dx\dt$ for simplicity. 

Let us manipulate the first integral, which is the leading term. 
To this end, we denote $A_k=\big\{u(\cdot, t)>k\big\}\cap K_R$ for fixed $t\in(t_o-S,t_o)$.
Observe that if $x\in A_k$ while $y\in K_R\setminus A_k$, one obtains
\begin{equation}\label{Eq:aux:1}
\begin{aligned}
\big|u(x,t) &- u(y,t)\big|^{p-2}\big(u(x,t) - u(y,t)\big)\big(\vp(x,t) - \vp(y,t)\big)\\
&=
\big(w_+(x,t)+w_-(y,t)\big)^{p-1} w_+(x,t)\z^p(x,t)\\
&\ge c(p) w_+^p(x,t) \z^p(x,t)+c(p) w^{p-1} _-(y,t)w_+(x,t)\z^p(x,t),
\end{aligned}
\end{equation}
for some proper $c=c(p)$.
Whereas if $x,y\in A_k$, we claim that
\begin{equation}\label{Eq:aux:2}
\begin{aligned}
\big|u(x,t) &- u(y,t)\big|^{p-2}\big(u(x,t) - u(y,t)\big)\big(\vp(x,t) - \vp(y,t)\big)\\
&\ge
\tfrac12\big|w_+(x,t)-w_+(y,t)\big|^{p} \max\big\{\z^p(x,t), \z^p(y,t)\big\}\\
&\quad - \boldsymbol\gm(p)\max\big\{w_+^p(x,t), w_+^p(y,t)\big\}\big|\z(x,t)-\z(y,t)\big|^p.
\end{aligned}
\end{equation}

To prove the claim, we may assume that $u(x,t)\ge u(y,t)$ due to symmetry and write
\begin{equation}\label{Eq:aux:3}
\begin{aligned}
\big|u(x,t) &- u(y,t)\big|^{p-2}\big(u(x,t) - u(y,t)\big)\big(\vp(x,t) - \vp(y,t)\big)\\
&=
\big(w_+(x,t)-w_+(y,t)\big)^{p-1} \big(w_+(x,t)\z^p(x,t) - w_+(y,t)\z^p(y,t)\big).
\end{aligned}
\end{equation}
If $\z(x,t)\ge\z(y,t)$, the above display \eqref{Eq:aux:3} is estimated below by
\[
\big(w_+(x,t)-w_+(y,t)\big)^{p}  \z^p(x,t),
\]
and hence the claim follows. If, instead $\z(x,t)<\z(y,t)$, then \eqref{Eq:aux:3} can be written as
\[
\big(w_+(x,t)-w_+(y,t)\big)^{p}  \z^p(y,t)-\big(w_+(x,t)-w_+(y,t)\big)^{p-1} w_+(x,t) \big(\z^p(y,t) - \z^p(x,t)\big).
\]
To proceed, we need an elementary inequality:
\begin{equation}\label{Eq:algebra}
a^p-b^p\le\varep a^p + \frac{p^p}{\varep^{p-1}}(a-b)^p\quad\text{for}\> a\ge b\ge0,\>\varep>0.
\end{equation}
This simply follows from the mean value theorem and Young's inequality:
\[
a^p-b^p\le p a^{p-1}(a-b)\le \varep a^p + \frac{p^p}{\varep^{p-1}}(a-b)^p.
\]
We may apply \eqref{Eq:algebra} with $a=\z(y,t)$, $b=\z(x,t)$ and $$\varep=\big(w_+(x,t)-w_+(y,t)\big)/2w_+(x,t)$$ to obtain
\begin{align*}
\big(w_+(x,t)&-w_+(y,t)\big)^{p-1} w_+(x,t) \big(\z^p(y,t) - \z^p(x,t)\big)\\
&\le\tfrac12\big(w_+(x,t)-w_+(y,t)\big)^{p}  \z^p(y,t)+\boldsymbol\gm(p)w_+^p(x,t) \big(\z(y,t) - \z(x,t)\big)^p.
\end{align*}
Combining the last two estimates in \eqref{Eq:aux:3}, we obtain \eqref{Eq:aux:2} in the case $\z(x,t)<\z(y,t)$ also.

Employing \eqref{Eq:aux:1} and \eqref{Eq:aux:2} and properly adjusting $c$ if necessary, the first integral $I_1$ is estimated by
\begin{align*}
I_1&\ge
c\int_{t_o-S}^{t_o}\int_{A_k}\int_{A_k} \big|w_+(x,t)-w_+(y,t)\big|^{p} \max\big\{\z^p(x,t), \z^p(y,t)\big\}\,\d\mu\\
&\quad+2c\int_{t_o-S}^{t_o}\int_{A_{k}}\int_{K_R\setminus A_k} \big|w_+(x,t)-w_+(y,t)\big|^{p}  \z^p(x,t) \,\d\mu\\
&\quad+2c\int_{t_o-S}^{t_o}\int_{A_{k}}\int_{K_R\setminus A_k} w_-^{p-1}(y,t)  w_+(x,t)\z^p(x,t) \,\d\mu\\
&\quad - \boldsymbol\gm(p)\int_{t_o-S}^{t_o}\int_{K_R}\int_{K_R}\max\big\{w_+^p(x,t), w_+^p(y,t)\big\}\big|\z(x,t)-\z(y,t)\big|^p\,\d\mu\\
&\ge c\int_{t_o-S}^{t_o}\int_{K_R}\int_{K_R} \big|w_+(x,t)-w_+(y,t)\big|^{p} \min\big\{\z^p(x,t), \z^p(y,t)\big\}\,\d\mu\\
&\quad+c\int_{t_o-S}^{t_o}\int_{K_R}\int_{K_R} w_-^{p-1}(y,t)  w_+(x,t)\z^p(x,t) \,\d\mu\\
&\quad - \boldsymbol\gm(p)\int_{t_o-S}^{t_o}\int_{K_R}\int_{K_R}\max\big\{w_+^p(x,t), w_+^p(y,t)\big\}\big|\z(x,t)-\z(y,t)\big|^p\,\d\mu.
\end{align*}
 
 Now let us treat the second integral $I_2$, which yields the only nonlocal integral in the energy estimate. Indeed, we first estimate
 \begin{align*}
-\big|u(x,t)& - u(y,t)\big|^{p-2}\big(u(x,t) - u(y,t)\big) \big(u(x,t)-k\big)_+\\
&\le \big(u(y,t)-u(x,t)\big)^{p-1}_+\big(u(x,t)-k\big)_+\\
&\le \big(u(y,t)-k\big)^{p-1}_+\big(u(x,t)-k\big)_+.
 \end{align*}
 As a result, 
 we may estimate by the condition \eqref{Eq:K} on the kernel  $K$,
\begin{align*}
-I_2&\le2\int_{t_o-S}^{t_o}\int_{K_{R}}\int_{\rn\setminus K_R} w_+^{p-1}(y,t) w_+(x,t)\z^p(x,t)\,\d\mu\\
&\le \boldsymbol\gm \iint_{Q(R,S)} \z^pw_{+}(x,t)\,\dx\dt \bigg(\essup_{\substack{x\in\supp\z(\cdot, t)\\t\in(t_o-S,t_o)}} \int_{\rn\setminus K_R}\frac{ w_{+}^{p-1}(y,t)}{|x-y|^{N+sp}}\,\dy\bigg).
\end{align*}
Note that the finiteness of the above nonlocal integral is guaranteed by \eqref{Eq:global-int}. This term will evolve into the tail term \eqref{Eq:tail} in the forthcoming theory.

Finally, we can put all these estimates together and use the condition \eqref{Eq:K} on the kernel  $K$ to conclude.
\end{proof}

The above energy estimate can be written in $K_R\times(t_o- S, t)$ for any $t\in(t_o- S, t_o)$. As usual, this will lead to an $L^\infty$ estimate in the time variable on the left, due to the arbitrariness of $t$. Further, by choosing a proper cutoff function $\z$, we derive the following two types of energy estimates from Proposition~\ref{Prop:2:1}, which encode all the information needed to show Theorem~\ref{Thm:1:1}.
\begin{corollary}\label{Cor:1}
	Let $u$ be a  local weak sub(super)-solution to \eqref{Eq:1:1} -- \eqref{Eq:K} in $E_T$.
	There exists a constant $\boldsymbol \gm (C_o,C_1,p)>0$, such that
 	for all cylinders $Q(r,\tau)\subset Q(R,S) \subset E_T$,
 	and every $k\in\rr$, 
	there holds
\begin{align*}
\essup_{t_o-\tau<t<t_o}\int_{K_r} &w^2_{\pm}(x,t)\,\dx+
	\int_{t_o-\tau}^{t_o}\int_{K_r}\int_{K_r}  \frac{|w_\pm(x,t) - w_\pm(y,t)|^p}{|x-y|^{N+sp}}\,\dx\dy\dt\\
	&\le
	\frac{\boldsymbol\gm R^{(1-s)p}}{(R-r)^p}\int_{t_o-S}^{t_o}\int_{K_R}  w^p_{\pm}(x,t) \,\dx\dt\\
	&\quad+\frac{\boldsymbol\gm R^{N}}{(R-r)^{N+sp}}\iint_{Q(R,S)} w_{\pm}(x,t)\,\dx\dt 
	\big[{\rm Tail}\big(w_\pm; Q(R,S)\big)\big]^{p-1}\\
	&\quad+ \frac{\boldsymbol\gm}{S-\tau}\iint_{Q(R,S)} w_{\pm}^2(x,t)\,\dx\dt  
\end{align*}
and 
\begin{align*}
	 \essup_{t_o-S<t<t_o}\int_{K_r}& w^2_{\pm}(x,t)\,\dx + \int_{t_o-S}^{t_o}\int_{K_r}\int_{K_r}  \frac{|w_\pm(x,t) - w_\pm(y,t)|^p}{|x-y|^{N+sp}}\,\dx\dy\dt\\
	 &\quad+\frac1{r^{N+sp}}\int_{t_o-S}^{t_o}\int_{K_r}\int_{K_r}   w_{\pm}(x,t) w^{p-1}_\mp(y,t)\,\dy\dx\dt  \\
	 &\le \int_{K_R} w^2_\pm(x,t_o-S)\,\dx
	+\frac{\boldsymbol\gm R^{(1-s)p}}{(R-r)^p}\int_{t_o-S}^{t_o}\int_{K_R}  w^p_{\pm}(x,t) \,\dx\dt\\
	&\quad +\frac{\boldsymbol\gm R^{N}}{(R-r)^{N+sp}}\iint_{Q(R,S)} w_{\pm}(x,t)\,\dx\dt
	\big[{\rm Tail}\big(w_\pm; Q(R,S)\big)\big]^{p-1}. 
\end{align*}
Here, we have denoted $w=u-k$ for simplicity.
\end{corollary}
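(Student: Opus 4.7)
The plan is to apply Proposition~\ref{Prop:2:1} on $K_R\times(t_o-S,t)$ for each $t\in(t_o-S,t_o)$ and then take the essential supremum in $t$, with two different choices of a product cutoff $\z(x,t)=\z_1(x)\z_2(t)$. In both cases I take $\z_1\in C^\infty_c(K_{(R+r)/2})$ with $\z_1\equiv 1$ on $K_r$, $0\le\z_1\le 1$, and $|\nabla\z_1|\le 4/(R-r)$. For the first estimate, $\z_2$ satisfies $\z_2\equiv 1$ on $[t_o-\tau,t_o]$, $\z_2\equiv 0$ on $(-\infty,t_o-S]$, and $|\z_2'|\le 2/(S-\tau)$; this kills the initial value of $\int_{K_R}\z^p w^2_\pm\,\dx$ at $t_o-S$ on the left of the energy estimate and produces the prefactor $1/(S-\tau)$ on the right from the $|\pl_t\z^p|$ term. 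For the second estimate I instead set $\z_2\equiv 1$ on $[t_o-S,t_o]$, so that $\pl_t\z^p\equiv 0$, but the initial contribution $\int_{K_R} w^2_\pm(\cdot,t_o-S)\,\dx$ now appears on the right.

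The spatial cutoff contribution is controlled using $|\z_1(x)-\z_1(y)|\le|x-y|/(R-r)$ together with the symmetrization $\max\{w^p_\pm(x),w^p_\pm(y)\}\le w^p_\pm(x)+w^p_\pm(y)$. After Fubini this reduces to
\begin{equation*}
\frac{\boldsymbol\gm}{(R-r)^p}\int_{t_o-S}^{t_o}\!\int_{K_R} w^p_\pm(x,t)\bigg(\int_{K_R}\frac{\dy}{|x-y|^{N-(1-s)p}}\bigg)\dx\,\dt,
\end{equation*}
where the inner integral is dominated by $\boldsymbol\gm R^{(1-s)p}$ since $(1-s)p>0$. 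The nonlocal tail supremum is handled by the geometric observation that for $x\in\supp\z_1\subset K_{(R+r)/2}$ and $y\in\rn\setminus K_R$ one has $|x-y|\ge|y-x_o|-(R+r)/2\ge\tfrac{R-r}{2R}|y-x_o|$, so
\begin{equation*}
\essup_{x\in\supp\z_1}\int_{\rn\setminus K_R}\frac{w_\pm^{p-1}(y,t)}{|x-y|^{N+sp}}\,\dy\le\Big(\frac{2R}{R-r}\Big)^{\!N+sp}\frac{1}{R^{sp}}\big[{\rm Tail}(w_\pm;Q(R,S))\big]^{p-1},
\end{equation*}
which collapses to $\boldsymbol\gm R^N(R-r)^{-N-sp}[{\rm Tail}]^{p-1}$, matching the claim.

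Finally, for the second estimate the cross term $w_\pm w_\mp^{p-1}$ on the left is extracted by restricting the inner $y$-integral in the corresponding term of Proposition~\ref{Prop:2:1} from $K_R$ down to $K_r$: on this subdomain $\z\equiv 1$ and $|x-y|\le 2r$, so the kernel is bounded below by $c\,r^{-N-sp}$, yielding precisely the $r^{-N-sp}$-prefactor of the stated cross term. I expect the main bookkeeping to be the tail estimate with the correct $R^N/(R-r)^{N+sp}$ dependence, which is what forces $\supp\z_1$ to sit strictly inside $K_R$; the rest is a routine collection of constants and passage to $\essup_t$ on the left-hand side.
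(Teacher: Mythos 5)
Your proposal is correct and follows essentially the route the paper intends: apply Proposition~\ref{Prop:2:1} on $K_R\times(t_o-S,t)$, take the essential supremum in $t$, and choose a space--time product cutoff (time-dependent for the first estimate, time-independent for the second), with the same treatment of the gradient term, the tail term via $|x-y|\ge\tfrac{R-r}{2R}|y-x_o|$, and the mixed-truncation term via restriction to $K_r\times K_r$. The only remarks are cosmetic: the harmless constants ($4/(R-r)$, $2^{-(N+sp)}$ on the left, etc.) are absorbed into $\boldsymbol\gm$ exactly as in the paper.
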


\section{Preliminary Tools} \label{S:prelim}


In this section, we collect the main modules of the proof of H\"older regularity. 
An important feature is that the tail term appears in these modules via an either-or form. This feature 
clarifies the role of the tail and greatly facilitates the delicate intrinsic scaling arguments to be unfolded in the next two sections. To streamline, we derive them from the energy estimates in Proposition~\ref{Prop:2:1}. Nevertheless, it will be clear from their proofs that Corollary~\ref{Cor:1} actually suffices. We also stress that the arguments in this section are given in a unified fashion for all $p>1$.

Throughout this section, let   
 $\mathcal{Q}:=K_R(x_o)\times(T_1,T_2]$ be a cylinder included in $E_T$.
We introduce numbers $\boldsymbol\mu^{\pm}$ and $\boldsymbol\om$ satisfying
\begin{equation*}
	\boldsymbol\mu^+\ge\essup_{\mathcal{Q}}u,
	\quad 
	\boldsymbol\mu^-\le\essinf_{\mathcal{Q}} u,
	\quad
	\boldsymbol\om\ge\boldsymbol\mu^+-\boldsymbol\mu^-.
\end{equation*}

The first result concerns a DeGiorgi type lemma. 
For simplicity, we omit the vertex $(x_o,t_o)$ from $Q_\rho(\theta)$. 
\begin{lemma}\label{Lm:DG:1}
 Let $u$ be a locally bounded, local weak sub(super)-solution to \eqref{Eq:1:1} -- \eqref{Eq:K} in $E_T$.
 For some $ \dl,\,\xi\in(0,1)$ set $\theta=\dl(\xi\boldsymbol\om)^{2-p}$ and assume $ Q_\varrho(\theta) \subset \mathcal{Q}$.
There exists a constant $\nu\in(0,1)$ depending only on 
 the data  and $\dl$, such that if
\begin{equation*}
	\Big|\Big\{
	\pm\big(\boldsymbol \mu^{\pm}-u\big)\le \xi\boldsymbol\om\Big\}\cap  Q_{\varrho}(\theta)\Big|
	\le
	\nu|Q_{\varrho}(\theta)|,
\end{equation*}
then either
\begin{equation*}
	 \Big( \frac{\rho}{R}\Big)^{\frac{sp}{p-1}}  {\rm Tail}\big( \big(u - \boldsymbol \mu^{\pm}\big)_{\pm}; \mathcal{Q}\big)>\xi\boldsymbol\om,
\end{equation*}
or
\begin{equation*}
	\pm\big(\boldsymbol\mu^{\pm}-u\big)\ge \tfrac12\xi\boldsymbol\om
	\quad
	\mbox{a.e.~in $ Q_{\frac{1}2\varrho}(\theta)$.}
\end{equation*}
Moreover, we have the dependence $\nu\approx  \dl^q$ for some $ q>1$ depending on $p$ and $N$.
\end{lemma}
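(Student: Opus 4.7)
The plan is a DeGiorgi iteration carried out in the intrinsic geometry. By symmetry I focus on the sub-solution case with upper truncation at $\boldsymbol\mu^+$. Set shrinking radii $\rho_n = \tfrac{\varrho}{2}(1+2^{-n})$, cylinders $Q_n = K_{\rho_n}(x_o) \times (t_o - \theta\rho_n^{sp}, t_o]$, increasing levels $k_n = \boldsymbol\mu^+ - \tfrac12\xi\boldsymbol\omega - 2^{-n-1}\xi\boldsymbol\omega$, truncations $w_n = (u-k_n)_+$, and measures $A_n = |\{u > k_n\} \cap Q_n|$, $Y_n = A_n/|Q_n|$. I pick piecewise smooth cutoffs $\z_n$ localizing from $Q_n$ down to $Q_{n+1}$ with the usual Lipschitz bounds $|D\z_n| \lesssim 2^n/\varrho$ and $|\pl_t \z_n^p| \lesssim 2^{n(sp+1)}/(\theta\varrho^{sp})$.

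The critical preliminary is to control $\text{Tail}(w_n; Q_n)$ in a self-similar fashion. Assuming the first alternative of the statement \emph{fails}, i.e.\ $(\varrho/R)^{sp/(p-1)}\text{Tail}((u-\boldsymbol\mu^+)_+; \mathcal{Q}) \le \xi\boldsymbol\omega$, I split the outer integral in \eqref{Eq:tail} into the annulus $K_R\setminus K_{\rho_n}$, where $w_n \le \boldsymbol\mu^+ - k_n \le \xi\boldsymbol\omega$ by the definition of $\mathcal{Q}$, and $\rn\setminus K_R$, where I use the pointwise bound $w_n \le (u-\boldsymbol\mu^+)_+ + \xi\boldsymbol\omega$. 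Elementary estimates on the weight $|x-x_o|^{-(N+sp)}$, combined with the failing first alternative, then yield $\text{Tail}(w_n; Q_n)^{p-1} \le \boldsymbol\gm (\xi\boldsymbol\omega)^{p-1}$, in which the nonlocal contribution has been traded for a purely local quantity.

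With this tail bound and the trivial pointwise bound $w_n \le \xi\boldsymbol\omega$, I apply Corollary~\ref{Cor:1} at level $k_n$ on the pair $(Q_{n+1}, Q_n)$. The intrinsic choice $\theta = \dl(\xi\boldsymbol\omega)^{2-p}$ is tailored so that the three terms on the right of Corollary~\ref{Cor:1} balance in powers of $\xi\boldsymbol\omega$, with an isolated $\dl^{-1}$ emerging from the time-derivative piece. Schematically, the outcome reads
\begin{equation*}
\essup_{t}\int_{K_{\rho_{n+1}}} w_n^2(\cdot,t)\,\dx + \int_{t_o-\theta\rho_{n+1}^{sp}}^{t_o}\!\!\int_{K_{\rho_{n+1}}}\!\!\int_{K_{\rho_{n+1}}} \frac{|w_n(x,t) - w_n(y,t)|^p}{|x-y|^{N+sp}}\,\dx\dy\dt \le \frac{\boldsymbol\gm\, b^n}{\dl}\frac{(\xi\boldsymbol\omega)^p}{\varrho^{sp}}\, A_n,
\end{equation*}
with $b$ depending only on the data. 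A parabolic fractional Sobolev embedding (interpolating $L^\infty_t L^2_x$ with $L^p_t W^{s,p}_x$ via Appendix~\ref{A:1}) raises $w_n$ to an exponent $p(1+\kp)$ for some $\kp = \kp(N,s,p) > 0$; coupling with $w_n \ge 2^{-n-2}\xi\boldsymbol\omega$ on $\{u > k_{n+1}\}$ produces the super-quadratic recursion
\begin{equation*}
Y_{n+1} \le \boldsymbol\gm\, b_1^n\, \dl^{-(1+\kp)}\, Y_n^{1+\kp},
\end{equation*}
in which the powers of $\xi\boldsymbol\omega$ cancel exactly thanks to the intrinsic scaling.

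A classical fast-geometric-convergence lemma closes the argument: $Y_n \to 0$ whenever $Y_0 \le \nu$ with $\nu = c\,\dl^{(1+\kp)/\kp}$ and $c$ small depending only on the data, giving $u \le \boldsymbol\mu^+ - \tfrac12\xi\boldsymbol\omega$ a.e.\ on $Q_{\varrho/2}(\theta)$, i.e.\ the second alternative, with the asserted dependence $\nu\approx\dl^q$ for $q = (1+\kp)/\kp > 1$. The main obstacle is the tail control in the second step: in the absence of any comparison principle or logarithmic device, one must use the bare either-or dichotomy and calibrate the splitting of the tail so that the annular piece and the global piece produce the same quantitative bound $\boldsymbol\gm(\xi\boldsymbol\omega)^{p-1}$; the intrinsic exponent $sp/(p-1)$ in the first alternative is precisely what makes this balance go through for every $p>1$ uniformly.
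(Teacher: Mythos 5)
Your proposal is correct and follows essentially the same route as the paper's proof: a DeGiorgi iteration in the intrinsic cylinders, with the nonlocal term tamed by splitting the tail of $w_n$ into the annulus $K_R\setminus K_{\rho_n}$ (bounded by $\xi\boldsymbol\om$ via the definition of $\boldsymbol\mu^{\pm}$ over $\mathcal{Q}$) and $\rn\setminus K_R$ (bounded via the assumed failure of the tail alternative), followed by the parabolic fractional Sobolev embedding and fast geometric convergence, with the powers of $\xi\boldsymbol\om$ cancelling by the intrinsic choice of $\theta$. The only cosmetic differences are that you argue near $\boldsymbol\mu^+$ instead of $\boldsymbol\mu^-$ and start from Corollary~\ref{Cor:1} rather than Proposition~\ref{Prop:2:1} (which the paper explicitly notes suffices), and your bookkeeping of the exact $\dl$-exponent differs harmlessly from the paper's, still yielding $\nu\approx\dl^{q}$ with $q>1$.
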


\begin{proof}
 It suffices to show the case of super-solution with $\boldsymbol\mu^-=0$. Assume $(x_o,t_o)=(0,0)$ and
define for $n\in \nn\cup\{0\}$,
\begin{align*}
	\left\{
	\begin{array}{c}
\dsty k_n= \frac{\xi\boldsymbol\om}{2} +\frac{\xi\boldsymbol\om }{2^{n+1}},
\\[5pt]
\dsty \rho_n=\frac{\rho}2+\frac{\rho}{2^{n+1}},\quad\tilde{\rho}_n=\frac{\rho_n+\rho_{n+1}}2,\\[5pt]
\dsty \hat{\rho}_n=\frac{3\rho_n+ \rho_{n+1}}{4},\quad \bar{\rho}_n=\frac{\rho_n+ 3\rho_{n+1}}{4},\\[5pt]
\dsty K_n=K_{\rho_n}, \quad \widetilde{K}_n=K_{\tilde{\rho}_n},\quad \widehat{K}_n=K_{\hat\rho_n},\quad \overline{K}_n=K_{\bar{\rho}_n},\\[5pt]
\dsty Q_n=K_n\times\big(-\theta\rho_n^{sp},0\big),\quad\widetilde{Q}_n=\widetilde{K}_n\times\big(-\theta\tilde{\rho}_n^{sp},0\big),\\[5pt]
\widehat{Q}_n=\widehat{K}_n\times\big(-\theta\hat{\rho}_n^{sp},0\big),\quad\overline{Q}_n=\overline{K}_n\times\big(-\theta\bar{\rho}_n^{sp},0\big).
\end{array}
	\right.
\end{align*}
It is helpful to observe that $$Q_{n+1}\subset\overline{Q}_n\subset\widetilde{Q}_n\subset\widehat{Q}_n\subset Q_n.$$
Introduce the cutoff function $\z$ in $Q_n$, vanishing outside $\widehat{Q}_{n}$ and
equal to the identity in $\widetilde{Q}_{n}$, such that
\begin{equation*}
|D\z|\le \frac{2^n}{\rho}\quad\text{ and }\quad |\pl_t \z|\le \frac{2^{psn}}{\theta\rho^{sp}}.
\end{equation*}
Let us examine the energy estimate of Proposition~\ref{Prop:2:1} 
in this setting:
\begin{align*}
	\essup_{-\theta\tilde\rho_n^{sp}<t<0}&\int_{\widetilde{K}_n} w^2_{-}(x,t)\,\dx+ \int_{-\theta\tilde\rho_n^p}^{0}\int_{\widetilde{K}_n}\int_{\widetilde{K}_n}  \frac{|w_-(x,t) - w_-(y,t)|^p}{|x-y|^{N+sp}}\,\dx\dy\dt\\
	&\quad\le
	\boldsymbol\gm\int_{-\theta\rho_n^{sp}}^{0}\int_{K_n}\int_{K_n}\max\big\{w^p_{-}(x,t), w^p_{-}(y,t)\big\} \frac{|\z(x,t) - \z(y,t)|^p}{|x-y|^{N+sp}}\,\dx\dy\dt\\
	&\qquad+\boldsymbol\gm\iint_{Q_n} \z^p w_{-}(x,t)\,\dx\dt \bigg(\essup_{\substack{x\in \widehat{K}_n\\t\in(-\theta\rho_n^{sp},0)}} \int_{\rn\setminus K_n}\frac{ w_{-}^{p-1}(y,t)}{|x-y|^{N+sp}}\,\dy\bigg)\\
	&\qquad+ \iint_{Q_n} |\pl_t\z^p|w_{-}^2(x,t)\,\dx\dt.  
\end{align*}

Recalling $w_-=(u-k_n)_-$, we treat the three terms on the right-hand side of the energy estimate  as follows.
For the first term, we estimate
\begin{align*}
\int_{-\theta\rho_n^{sp}}^{0}&\int_{K_n}\int_{K_n}\max\big\{w_{-}(x,t), w_{-}(y,t)\big\}^p \frac{|\z(x,t) - \z(y,t)|^p}{|x-y|^{N+sp}}\,\dx\dy\dt\\
&\le 2^{pn+1} \frac{(\xi\boldsymbol\om)^p}{\rho^p}\int_{-\theta\rho_n^p}^{0}\int_{K_n}\int_{K_n} \frac{ \chi_{\{u(x,t)<k_n\}} }{|x-y|^{N+(s-1)p}}\,\dx\dy\dt\\
&\le \boldsymbol \gm 2^{pn} \frac{(\xi\boldsymbol\om)^p}{\rho^{sp}} |A_n|,
\end{align*}
where we have defined $A_n:=\{u<k_n\}\cap Q_n$.
 
For the second term, we observe that for $|y|\ge \rho_n$ and $|x|\le \hat\rho_n$, there holds
\[
\frac{|y-x|}{|y|}\ge1-\frac{\hat\rho_n}{ \rho_n}=\frac14\Big(\frac{\rho_n-\rho_{n+1}}{\rho_n}\Big)\ge\frac1{2^{n+4}};
\]
consequently, recalling also $u\ge\boldsymbol \mu^-=0$ a.e. in $\mathcal{Q}$ by assumption, we estimate
\begin{align*}
\iint_{Q_n} & \z^pw_{-}(x,t)\,\dx\dt \bigg(\sup_{\substack{x\in\widehat{K}_n\\t\in(-\theta\rho_n^{sp},0)}} \int_{\rn\setminus K_n}\frac{ w_{-}^{p-1}(y,t)}{|x-y|^{N+sp}}\,\dy\bigg)\\
&\le \boldsymbol\gm 2^{(N+sp)n} \xi\boldsymbol\om |A_n| \bigg(\boldsymbol\gm 2^{spn} \frac{(\xi\boldsymbol\om)^{p-1}}{\rho^{sp}}+\essup_{\substack{t\in(-\theta\rho_n^{sp},0)}} \int_{\rn\setminus K_R}\frac{ u_{-}^{p-1}(y,t)}{|y|^{N+sp}}\,\dy\bigg)\\
&= \boldsymbol\gm 2^{(N+sp)n} \frac{\xi\boldsymbol\om}{\rho^{sp}} |A_n| \bigg(\gm 2^{spn} (\xi\boldsymbol\om)^{p-1} + \Big( \frac{\rho}{R}\Big)^{sp} [{\rm Tail}(u_-; \mathcal{Q})]^{p-1} \bigg)\\
&\le \boldsymbol\gm 2^{(N+2sp)n} \frac{(\xi\boldsymbol\om)^p}{\rho^{sp}} |A_n|.
\end{align*}
  In the last line, we have enforced
$$ \Big( \frac{\rho}{R}\Big)^{\frac{sp}{p-1}}   {\rm Tail}(u_{-}; \mathcal{Q})  \le \xi\boldsymbol\om.$$

For the third term, it is quite standard to obtain
\[
\iint_{Q_n} |\pl_t\z^p |(u-k_n)_{-}^2\,\dx\dt\le\frac{2^{spn}}{\theta\rho^{sp}} (\xi\boldsymbol\om)^2 |A_n|.
\]
Collecting these estimates on the right-hand side of the energy estimate, we arrive at
\begin{align*}
\essup_{-\theta\tilde{\rho}^{sp}_n<t<0}&\int_{\widetilde{K}_n} w_-^2\,\dx +
\int_{-\theta\tilde{\rho}^{sp}_n}^{0}\int_{\widetilde{K}_n}\int_{\widetilde{K}_n}  \frac{|w_-(x,t) - w_-(y,t)|^p}{|x-y|^{N+sp}}\,\dx\dy\dt\\
&\le \boldsymbol\gm 2^{(N+2p)n}\frac{(\xi\boldsymbol\om)^p}{\dl\rho^{sp}}|A_n|.
\end{align*}

Now set $0\le\phi\le1$ to be a cutoff function in $\widetilde{Q}_n$, which vanishes outside $\overline{Q}_n$,
 equals the identity in $Q_{n+1}$ and satisfies $|D\phi|\le 2^n/\rho$.
 An application of the H\"older inequality  and the Sobolev imbedding
(cf.~Proposition~\ref{Sobolev-2} with $d=2^{-n-4}$)  gives that 
\begin{align*}
	&\frac{ \xi\boldsymbol\om}{2^{n+2}}
	|A_{n+1}|
	\le 
	\iint_{\widetilde{Q}_n}w_-\phi\,\dx\dt\\
	&\le
	\bigg[\iint_{\widetilde{Q}_n}\big(w_-\phi\big)^{p\kappa}
	\,\dx\dt\bigg]^{\frac{1}{p\kappa}}|A_n|^{1-\frac{1}{p\kappa}}\\
	&\le\boldsymbol\gm
	\bigg[\rho^{sp}\int_{-\theta\tilde{\rho}^p_n}^{0}\int_{\widetilde{K}_n}\int_{\widetilde{K}_n}  \frac{\big|w_-\phi(x,t) - w_-\phi(y,t)\big|^p}{|x-y|^{N+sp}}\,\dx\dy\dt+2^{(N+sp)n}\iint_{\widetilde{Q}_n}\big(w_-\phi\big)^p\,
	\dx\dt\bigg]^{\frac{1}{\kappa}}\\
	&\quad\ 
	\times\bigg[\essup_{-\theta\tilde{\varrho}_n^{sp}<t<0}
	\bint_{\widetilde{K}_n}\big(w_-\phi\big)^2\,\dx\bigg]^{\frac{(\kappa_*-1)}{\kappa_* \kappa p}}
	 |A_n|^{1-\frac{1}{\kappa p}}\\
	&\le 
	\boldsymbol\gm\boldsymbol b^n \dl^{-(\frac1{\kappa}+\frac{2\kappa_*-1}{\kappa_*\kappa p})} \rho^{-(N+sp)\frac{\kappa_* -1}{\kappa_*\kappa p}} (\xi\boldsymbol\om)^{\frac{2\kappa_* -1}{\kappa_* \kappa}}  
	|A_n|^{1+\frac{\kappa_* -1}{\kappa_*\kappa p}}
\end{align*}
for some $\boldsymbol b=\boldsymbol b(p,N)>1$.
To obtain the last line, we used the triangle inequality
\begin{align*}
\big|w_-\phi(x,t) &- w_-\phi(y,t)\big|^p \\
&\le c  \big|w_-(x,t) - w_-(y,t) \big|^p \phi^p(x,t) 
 + c  w^p_-(y,t)  \big|\phi(x,t) - \phi(y,t)\big|^p,
\end{align*}
for some $c=c(p)$, such that
\begin{align*}
\rho^{sp}&\int_{-\theta\tilde{\rho}^p_n}^{0}\int_{\widetilde{K}_n}\int_{\widetilde{K}_n}  \frac{\big|w_-\phi(x,t) - w_-\phi(y,t)\big|^p}{|x-y|^{N+sp}}\,\dx\dy\dt\\
&\le c \rho^{sp}\int_{-\theta\tilde{\rho}^p_n}^{0}\int_{\widetilde{K}_n}\int_{\widetilde{K}_n}  \frac{\big|w_-(x,t) - w_-(y,t)\big|^p}{|x-y|^{N+sp}}\,\dx\dy\dt\\
&\quad + c \rho^{sp}\int_{-\theta\tilde{\rho}^p_n}^{0}\int_{\widetilde{K}_n}\int_{\widetilde{K}_n}  \frac{w^p_-(y,t)\big|\phi(x,t) - \phi(y,t)\big|^p}{|x-y|^{N+sp}}\,\dx\dy\dt\\
&\le c \rho^{sp}\int_{-\theta\tilde{\rho}^p_n}^{0}\int_{\widetilde{K}_n}\int_{\widetilde{K}_n}  \frac{\big|w_-(x,t) - w_-(y,t)\big|^p}{|x-y|^{N+sp}}\,\dx\dy\dt\\
&\quad + \boldsymbol\gm 2^{pn} \iint_{\widetilde{Q}_n}  w^p_-(y,t) \,\dy\dt.
\end{align*}
Plugging this into the second-to-last line and employing the above energy estimate, the last line follows. 
Notice also from  Proposition~\ref{Sobolev-2} there holds
\[
\kappa:=1+\frac{2(\kappa_* -1)}{p\kappa_*}.
\]

In terms of $ \boldsymbol Y_n=|A_n|/|Q_n|$, this estimate leads to the recursive inequality
\begin{equation*}
\begin{aligned}
	 \boldsymbol Y_{n+1}
	\le
	&\boldsymbol\gm \dl^{-(\frac1{\kappa}+\frac{\kappa_*}{\kappa_*\kappa p})}(2\boldsymbol b)^n  \boldsymbol Y_n^{1+\frac{\kappa_* -1}{\kappa_*\kappa p}},
\end{aligned}
\end{equation*}
for some generic constant $\boldsymbol\gm$. 
Hence, by \cite[Chapter I, Lemma~4.1]{DB}, 
there exists
a positive constant $\nu$ depending only on the data,
 such that
$\boldsymbol Y_n\to0$ if we require that $\boldsymbol Y_o\le \nu$.
\end{proof}
The next lemma is a variant of the previous one, involving quantitative initial data.
\begin{lemma}\label{Lm:DG:initial:1}
Let $u$ be a locally bounded, local weak sub(super)-solution to \eqref{Eq:1:1} -- \eqref{Eq:K} in $E_T$. 
Let $\xi\in(0,1)$. 
There exists a positive constant $\nu_o$ depending only on the data, such that if
\[
\pm\big(\boldsymbol\mu^{\pm}-u(\cdot, t_o)\big)\ge \xi\boldsymbol\om \quad\text{ a.e. in } K_{\rho}(x_o),
\]
then either
\begin{equation*}
	 \Big( \frac{\rho}{R}\Big)^{\frac{sp}{p-1}} {\rm Tail}\big(\big(u - \boldsymbol \mu^{\pm}\big)_{\pm}; \mathcal{Q}\big) >\xi\boldsymbol\om,
\end{equation*}
or
\[
\pm\big(\boldsymbol\mu^{\pm}-u\big)\ge \tfrac12\xi\boldsymbol\om\quad\text{ a.e. in }K_{\frac12\rho}(x_o)\times\big(t_o, t_o+\nu_o(\xi\boldsymbol\om)^{2-p}\rho^{sp}\big],
\]
provided the cylinders are included in $\mathcal{Q}$.
\end{lemma}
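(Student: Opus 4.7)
My plan is to adapt the De Giorgi iteration used in Lemma~\ref{Lm:DG:1}, with the pointwise initial hypothesis and the shortness of the forward time horizon together playing the role of the measure-smallness assumption. It suffices to treat the super-solution case with $\boldsymbol\mu^-=0$, and after translation I assume $(x_o,t_o)=(0,0)$. I set $S_o:=\nu_o(\xi\boldsymbol\om)^{2-p}\rho^{sp}$, with $\nu_o\in(0,1)$ to be determined, and use the same levels $k_n=\frac12\xi\boldsymbol\om+\xi\boldsymbol\om/2^{n+1}$, radii $\rho_n=\rho/2+\rho/2^{n+1}$ together with their intermediate companions, and cutoff functions as in Lemma~\ref{Lm:DG:1}, but now on the \emph{forward} cylinders $Q_n:=K_{\rho_n}\times(0,S_o]$.

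The driving observation is that, by hypothesis, $u(x,0)\ge\xi\boldsymbol\om>k_n$ a.e.\ on $K_{\rho_n}$, so $(u-k_n)_-(\cdot,0)\equiv 0$ on $K_{\rho_n}$. I would apply the second energy inequality of Corollary~\ref{Cor:1}, in its obvious forward-in-time version (obtained by the same time-mollified testing argument that underlies Proposition~\ref{Prop:2:1}), to $w=u-k_n$ on the pair $Q_{n+1}\subset Q_n$; the initial-value term $\int_{K_{\rho_n}}w_-^2(x,0)\,\dx$ then vanishes identically, and, crucially, no $1/(S-\tau)$ time-derivative term appears. The remaining right-hand side is handled exactly as in Lemma~\ref{Lm:DG:1}: the local $L^p$ contribution is bounded by $\boldsymbol\gm 2^{pn}\rho^{-sp}(\xi\boldsymbol\om)^p|A_n|$, and, under the second alternative $(\rho/R)^{sp/(p-1)}\,{\rm Tail}(u_-;\mathcal Q)\le\xi\boldsymbol\om$, the nonlocal tail contribution is likewise bounded by $\boldsymbol\gm 2^{(N+sp)n}\rho^{-sp}(\xi\boldsymbol\om)^p|A_n|$, where $A_n:=\{u<k_n\}\cap Q_n$.

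Next I would interpolate the $L^\infty_tL^2_x$ bound with the $L^p_tW^{s,p}_x$ seminorm bound via the parabolic Sobolev inequality (Proposition~\ref{Sobolev-2}), using a cutoff equal to $1$ on $Q_{n+1}$ and supported in $\widetilde Q_n$, exactly as in Lemma~\ref{Lm:DG:1}. A scale-invariance check (already implicit in Lemma~\ref{Lm:DG:1}) then yields a recursive inequality of the form
\[
\boldsymbol Y_{n+1}\le \boldsymbol\gm\,\boldsymbol b^{n}\,\nu_o^{\alpha}\,\boldsymbol Y_n^{1+\alpha},
\]
for $\boldsymbol Y_n:=|A_n|/|Q_n|$, with $\alpha>0$ and $\boldsymbol b>1$ depending only on the data. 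The positive power $\nu_o^{\alpha}$ arises through $|Q_n|\approx\nu_o\rho^{N+sp}(\xi\boldsymbol\om)^{2-p}$ when converting $|A_n|^{1+\alpha}$ to $\boldsymbol Y_n^{1+\alpha}$, and — in contrast to Lemma~\ref{Lm:DG:1} — no compensating $\nu_o^{-q}$ factor is generated, since the time-derivative term is absent. Since $\boldsymbol Y_0\le 1$ trivially, choosing $\nu_o$ small enough (depending only on the data) brings the prefactor below the threshold of \cite[Ch.~I, Lemma~4.1]{DB} and forces $\boldsymbol Y_n\to 0$, which delivers $u\ge\frac12\xi\boldsymbol\om$ a.e.\ on $K_{\rho/2}\times(0,S_o]$.

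The main obstacle I expect is bookkeeping the $\nu_o$-dependence carefully enough to confirm that it indeed enters the recursion with a strictly positive power; the whole argument pivots on this, since $\boldsymbol Y_0\le 1$ alone is far too weak to start the standard geometric iteration. A secondary, essentially formal, point is the forward-in-time analog of Corollary~\ref{Cor:1}, but this reduces at once to the same mollified-testing computation that underlies Proposition~\ref{Prop:2:1}.
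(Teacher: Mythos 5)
Your proposal is correct and follows essentially the same route as the paper: a forward De Giorgi iteration with shrinking balls but fixed time height, the initial term killed by the pointwise hypothesis $u(\cdot,0)\ge\xi\boldsymbol\om$ and a time-independent cutoff, the tail handled through the either-or alternative, and the recursion closed because the prefactor carries the positive power $[\theta/(\xi\boldsymbol\om)^{2-p}]^{\alpha}=\nu_o^{\alpha}$, so that $\boldsymbol Y_0\le 1$ suffices once $\nu_o$ is small. The only (immaterial) difference is parametrization: the paper keeps $\theta$ free, reads off the smallness condition $\boldsymbol Y_0\le\nu_o(\xi\boldsymbol\om)^{2-p}/\theta$ from DiBenedetto's fast-convergence lemma and then sets $\theta=\nu_o(\xi\boldsymbol\om)^{2-p}$, while you fix $\theta$ upfront and shrink $\nu_o$ at the end; also no separate ``forward'' energy estimate is needed, since Proposition~\ref{Prop:2:1} applies verbatim with $t_o-S=0$.
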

\begin{proof}
Assume $(x_o,t_o)=(0,0)$. It suffices to show the case of super-solutions with $\boldsymbol\mu^-=0$. 
Let us first examine the energy estimate of Proposition~\ref{Prop:2:1}
in $Q(R,S)\equiv K_{\rho}\times(0,\theta\rho^{sp})$ for some $\theta$ to be determined.
Note that the time level $t_o-S$ in Proposition~\ref{Prop:2:1} corresponds to $t=0$ here.
Let $\z(x)$ be a time independent, piecewise smooth, cutoff function in $K_\rho$ that vanishes on $\pl K_\rho$.
If we take the level $k\le \xi\boldsymbol\om$, the spatial integral at $t=0$ 
(i.e. the term at the time level $t_o-S$ in Proposition~\ref{Prop:2:1}) vanishes due to
the assumption that $u(\cdot, 0)\ge   \xi\boldsymbol\om$ a.e. in $K_{\rho}$.
The term involving $\pl_t\z$ also vanishes since $\z$ is independent of $t$.
 As a result, the energy estimate reads
\begin{align*}
	\essup_{0<t<\theta\varrho^{sp}}&
	\int_{K_\varrho}\z^p w_-^2\,\dx
	+
	 \int_{0}^{\theta\varrho^{sp}}\int_{K_\rho}\int_{K_\rho}  \min\big\{\z^p(x),\z^p(y)\big\} \frac{|w_-(x,t) - w_-(y,t)|^p}{|x-y|^{N+sp}}\,\dx\dy\dt\\\
	&\quad\le
	 \boldsymbol\gm\int_{0}^{\theta\varrho^{sp}}\int_{K_\rho}\int_{K_\rho}\max\big\{w^p_{-}(x,t), w^p_{-}(y,t)\big\} \frac{|\z(x) - \z(y)|^p}{|x-y|^{N+sp}}\,\dx\dy\dt\\
	&\qquad+\boldsymbol\gm\int_{0}^{\theta\varrho^{sp}}\int_{K_\rho} \z^pw_{-}(x,t)\,\dx\dt \bigg(\essup_{\substack{x\in\supp\z \\t\in(0,\theta\varrho^{sp})}} \int_{\rn\setminus K_\rho}\frac{ w_{-}^{p-1}(y,t)}{|x-y|^{N+sp}}\,\dy\bigg).
\end{align*}

Introduce $k_n$,  $\rho_n$, $\tilde{\rho}_n$, $\hat{\rho}_n$, $\bar{\rho}_n$, $K_n$, $\widetilde{K}_n$, $\widehat{K}_n$ and $\overline{K}_n$ as in Lemma~\ref{Lm:DG:1}. 
The only difference is that the cylinders $Q_n$, $\widetilde{Q}_n$, $\widehat {Q}_n$ and $\overline{Q}_n$ are now of forward type, i.e. $Q_n=K_n\times(0,\theta\rho^{sp})$, $\widetilde{Q}_n=\widetilde{K}_n\times(0,\theta\rho^{sp})$, $\widehat{Q}_n=\widehat{K}_n\times(0,\theta\rho^{sp})$ and $\overline{Q}_n=\overline{K}_n\times(0,\theta\rho^{sp})$. Note that while shrinking
the base balls $K_n$, $\widehat{K}_n$, $\widetilde{K}_n$ and $\overline{K}_n$ along $\rho_n$,  the height of the cylinders is fixed.
For the piecewise smooth function $\z(x)$ in $K_n$,
we choose it to vanish outside $\widehat{K}_n$, be equal to $1$ in $\widetilde{K}_n$, and satisfy $|D\z|\le 2^n/\rho$.
By a similar treatment of the right-hand side as in Lemma~\ref{Lm:DG:1}, after enforcing
$$ \Big( \frac{\rho}{R}\Big)^{\frac{sp}{p-1}}   {\rm Tail}(u_{-}; \mathcal{Q})  \le \xi\boldsymbol\om,$$
 we may obtain that
\begin{align*}
\essup_{0<t<\theta\rho^{sp} }&\int_{\widetilde{K}_n} w_-^2\,\dx +
\int^{\theta\rho^{sp}}_{0}\int_{\widetilde{K}_n}\int_{\widetilde{K}_n}  \frac{|w_-(x,t) - w_-(y,t)|^p}{|x-y|^{N+sp}}\,\dx\dy\dt\\
&\le \boldsymbol\gm 2^{(N+2p)n}\frac{(\xi\boldsymbol\om)^p}{\rho^{sp}}|A_n|,
\end{align*}
where  $A_n:=\{u<k_n\}\cap Q_n$.

Then we may proceed as in Lemma~\ref{Lm:DG:1} to obtain the recursive inequality
\begin{equation*}
\begin{aligned}
	 \boldsymbol Y_{n+1}
	\le
	&\boldsymbol\gm\boldsymbol b^n  \bigg[\frac{\theta}{(\xi\boldsymbol\om)^{2-p}}\bigg]^{\frac{\kappa_* -1}{\kappa_*\kappa p}}\boldsymbol Y_n^{1+\frac{\kappa_* -1}{\kappa_*\kappa p}},
\end{aligned}
\end{equation*}
where $ \boldsymbol Y_n=|A_n|/|Q_n|$ and the constants $\boldsymbol\gm,\,\boldsymbol b$ depend only on the data.
Hence, by \cite[Chapter I, Lemma~4.1]{DB}, 
there exists
a positive constant $\nu_o$ depending only on the data, such that if
$$\boldsymbol Y_o\le   \frac{\nu_o(\xi\boldsymbol\om)^{2-p}}{\theta},$$ 
then
$\boldsymbol Y_n\to0$. 
To finish the proof, we choose $\theta=\nu_o(\xi\boldsymbol\om)^{2-p}$.
\end{proof}

The following lemma propagates measure theoretical information forward in time.

\begin{lemma}\label{Lm:3:1}
 Let $u$ be a locally bounded, local weak sub(super)-solution to \eqref{Eq:1:1} -- \eqref{Eq:K} in $E_T$.
Introduce parameters $\xi$ and $\al$ in $(0,1)$. There exist $\dl$ and $\varep$ in $(0,1)$, depending only on the data and $\al$, such that if
	\begin{equation*}
	\Big|\Big\{
		\pm\big(\boldsymbol \mu^{\pm}-u(\cdot, t_o)\big)\ge \xi\boldsymbol \om
		\Big\}\cap K_{\varrho}(x_o)\Big|
		\ge\al \big|K_{\varrho}\big|,
	\end{equation*}
	then either $$ \Big( \frac{\rho}{R}\Big)^{\frac{sp}{p-1}} {\rm Tail}\big(\big(u - \boldsymbol \mu^{\pm}\big)_{\pm}; \mathcal{Q}\big) >\xi\boldsymbol \om,$$ or
	\begin{equation*}
	\Big|\Big\{
	\pm\big(\boldsymbol \mu^{\pm}-u(\cdot, t)\big)\ge \varep \xi\boldsymbol \om\Big\} \cap K_{\varrho}(x_o)\Big|
	\ge\frac{\al}2 |K_\varrho|
	\quad\mbox{ for all $t\in\big(t_o,t_o+\dl(\xi\boldsymbol \om)^{2-p}\varrho^{sp}\big]$,}
\end{equation*}
provided this cylinder is included in $\mathcal{Q}$. Moreover, we may trace the dependences by $\varep\approx\al$ and $\dl\approx\al^{p+1}$.
\end{lemma}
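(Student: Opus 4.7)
My plan is to apply the energy estimate of Proposition~\ref{Prop:2:1} with a \emph{time-independent} cutoff and exploit the quantitative initial data at $t_o$. It suffices to treat the super-solution case with $\boldsymbol\mu^- = 0$, so that $u \geq 0$ a.e.\ in $\mathcal{Q}$ and the hypothesis becomes $|\{u(\cdot, t_o) \geq \xi\boldsymbol\om\} \cap K_\rho(x_o)| \geq \alpha|K_\rho|$. Set $k = \xi\boldsymbol\om$ and $w_- = (u-k)_-$. The key observation is that at $t = t_o$ the measure assumption translates immediately into an $L^2$ gap,
\[
\int_{K_\rho} w_-^2(x, t_o)\,\dx \leq (\xi\boldsymbol\om)^2 \big|K_\rho \setminus \{u(\cdot, t_o) \geq \xi\boldsymbol\om\}\big| \leq (1-\alpha)(\xi\boldsymbol\om)^2|K_\rho|.
\]

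Next, I apply Proposition~\ref{Prop:2:1} on the forward cylinder $K_\rho \times (t_o, t_o + S]$ with $S = \delta(\xi\boldsymbol\om)^{2-p}\rho^{sp}$ and a time-independent, piecewise smooth cutoff $\zeta(x)$ that equals $1$ on $K_{(1-\sigma)\rho}$, vanishes outside $K_\rho$, and satisfies $|D\zeta| \leq (\sigma\rho)^{-1}$; here $\delta,\sigma \in (0,1)$ will be tuned in terms of $\alpha$. Because $\zeta$ is time-independent, the $|\pl_t \zeta^p|$ term drops. To control the nonlocal tail source term I use the pointwise decomposition $w_- \leq \xi\boldsymbol\om + u_-$ together with $u_- \equiv 0$ inside $\mathcal{Q}$: the annular contribution from $K_R \setminus K_\rho$ vanishes, while the far-field piece is bounded via the either-or alternative $(\rho/R)^{sp/(p-1)}\operatorname{Tail}(u_-;\mathcal{Q}) \leq \xi\boldsymbol\om$ (assumed in force; otherwise the lemma holds trivially). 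This yields a tail bound of order $(\xi\boldsymbol\om)^{p-1}$. Converting the $L^p$ and $L^1$ integrals of $w_-$ into $(\xi\boldsymbol\om)^2$ volume terms via the specific choice of $S$, the energy estimate reduces to
\[
\essup_{t_o < t \leq t_o+S} \int_{K_{(1-\sigma)\rho}} w_-^2(x, t)\,\dx \leq \Big[(1-\alpha) + \frac{\boldsymbol\gm\, \delta}{\sigma^{N+sp}}\Big](\xi\boldsymbol\om)^2|K_\rho|.
\]

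To conclude, I convert the $L^2$ bound back into a measure estimate. On $\{u(\cdot, t) < \varepsilon\xi\boldsymbol\om\}$ one has $w_-(x,t) \geq (1-\varepsilon)\xi\boldsymbol\om$, and combining this with $|K_\rho \setminus K_{(1-\sigma)\rho}| \leq N\sigma |K_\rho|$ gives
\[
\big|\{u(\cdot, t) < \varepsilon\xi\boldsymbol\om\} \cap K_\rho\big| \leq \bigg[\frac{(1-\alpha) + \boldsymbol\gm\delta \sigma^{-(N+sp)}}{(1-\varepsilon)^2} + N\sigma\bigg]|K_\rho|.
\]
A three-step parameter selection then finishes the argument: pick $\varepsilon \approx \alpha$ to absorb the first term up to $\alpha/8$; pick $\sigma \approx \alpha$ so that $N\sigma \leq \alpha/8$; finally pick $\delta$ small enough -- a power of $\alpha$ -- to kill $\boldsymbol\gm \delta/\sigma^{N+sp}$, yielding the stated scaling $\delta \approx \alpha^{p+1}$ (up to dimensional constants absorbed into $\approx$). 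The main difficulty, dictating all of the above balances, is the nonlocal tail management: the function $w_-$ is not a priori bounded outside $\mathcal{Q}$, and only the joint use of the decomposition $w_- \leq \xi\boldsymbol\om + u_-$, the vanishing of $u_-$ inside $\mathcal{Q}$, and the either-or alternative allows the nonlocal source to be absorbed into a multiple of $\xi\boldsymbol\om$, so that no information about the unscaled global size of $u$ enters the final constants.
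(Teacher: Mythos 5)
Your argument is correct in its main thrust and follows the paper's proof essentially verbatim: reduce to super-solutions with $\boldsymbol\mu^-=0$, apply Proposition~\ref{Prop:2:1} over the forward cylinder $K_\rho\times(t_o,t_o+\delta(\xi\boldsymbol\om)^{2-p}\rho^{sp}]$ with level $k=\xi\boldsymbol\om$ and a time-independent cutoff equal to $1$ on $K_{(1-\sigma)\rho}$, enforce the tail alternative so that the nonlocal source is of order $(\xi\boldsymbol\om)^{p-1}\rho^{-sp}$, and conclude by the standard measure-theoretical argument of \cite[Chapter~4, Lemma~1.1]{DBGV-mono}, which you carry out explicitly. (A small point of wording: in the decomposition $w_-\le\xi\boldsymbol\om+u_-$ only the $u_-$-part of the integral over $K_R\setminus K_\rho$ vanishes; the $(\xi\boldsymbol\om)^{p-1}$-part over that annulus persists, but it is harmless and of the order you claim.)

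The one genuine flaw concerns the final clause of the lemma, the tracing $\delta\approx\alpha^{p+1}$. From your displayed bound $\boldsymbol\gm\,\delta\,\sigma^{-(N+sp)}$, the selection $\varepsilon\approx\alpha$, $\sigma\approx\alpha$, $\boldsymbol\gm\,\delta\,\sigma^{-(N+sp)}\lesssim\alpha$ produces $\delta\approx\alpha^{1+N+sp}$, not $\alpha^{p+1}$; this is a discrepancy in the exponent, so it cannot be ``absorbed into $\approx$''. To obtain the stated dependence, estimate the local term as the paper does, using $|\zeta(x)-\zeta(y)|\le|x-y|/(\sigma\rho)$ together with $w_-\le\xi\boldsymbol\om$ on $K_\rho$:
\begin{align*}
\int_{t_o}^{t_o+S}\int_{K_\rho}\int_{K_\rho}\max\big\{w_-^p(x,t),w_-^p(y,t)\big\}\,
\frac{|\zeta(x)-\zeta(y)|^p}{|x-y|^{N+sp}}\,\dx\dy\dt
&\le \boldsymbol\gm\,\frac{(\xi\boldsymbol\om)^p\,S}{\sigma^p\,\rho^{sp}}\,|K_\rho|
=\boldsymbol\gm\,\frac{\delta(\xi\boldsymbol\om)^{2}}{\sigma^{p}}\,|K_\rho|,
\end{align*}
since $\int_{K_\rho}|x-y|^{-N-(s-1)p}\,\dy\le\boldsymbol\gm\rho^{(1-s)p}$ and $S=\delta(\xi\boldsymbol\om)^{2-p}\rho^{sp}$. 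With $\sigma\approx\alpha$ this forces only $\delta\approx\alpha\sigma^{p}\approx\alpha^{p+1}$, which is the claimed scaling. Since later applications fix $\alpha$ to explicit values, this does not affect the rest of the theory, but it is part of the lemma's statement and your bound as written does not deliver it.
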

\begin{proof}
 We only show the case of super-solutions with $\boldsymbol\mu^-=0$.
Assume $(x_o,t_o)=(0,0)$. 
Use the energy estimate in Proposition~\ref{Prop:2:1}
in the cylinder $Q=K_{\varrho}\times(0,\dl(\xi\boldsymbol \om)^{2-p}\varrho^p]$, with
$k=\xi\boldsymbol\om$ and choose a standard non-negative cutoff function 
$\z(x,t)\equiv \z(x)$ independent of time that equals $1$ on $K_{(1-\sig)\varrho}$ with $\sigma\in(0,1)$ to be chosen later 
and vanishes on $\pl K_{\varrho}$ satisfying
$|D\z|\le(\sig\varrho)^{-1}$;
in such a case, estimating the right-hand side as in Lemma~\ref{Lm:DG:1} and enforcing
$$ \Big( \frac{\rho}{R}\Big)^{\frac{sp}{p-1}}   {\rm Tail}(u_{-}; \mathcal{Q})  \le \xi\boldsymbol\om,$$
we have for all $0<t<\dl(\xi\boldsymbol \om)^{2-p}\varrho^p$, that
\begin{align*}
	\int_{K_{\sig\varrho}\times\{t\}} (u-\xi\boldsymbol\om)^2_- \,\dx\nonumber
	\le
	\int_{K_\varrho\times\{0\}} (u-\xi\boldsymbol\om)^2_-\, \dx
	+
	\boldsymbol \gm\frac{  \dl(\xi\boldsymbol\om)^{2}}{\sig^p}|K_\varrho|. 
\end{align*}
The rest of the proof is standard and we refer to \cite[Chapter~4, Lemma~1.1]{DBGV-mono}.
\end{proof}

The following measure shrinking lemma is usually a delicate part in the  theory of the  parabolic $p$-Laplacian.
However, the  term that involves mixed positive/negative truncations  in the energy estimate greatly simplifies the argument. 
For ease of notation, we omit the vertex $(x_o,t_o)$ from $Q_\rho(\theta)$. 
\begin{lemma}\label{Lm:3:2}
 Let $u$ be a locally bounded, local weak sub(super)-solution to \eqref{Eq:1:1} -- \eqref{Eq:K} in $E_T$.
Suppose that for  some $\dl$, $\sig$ and $\xi$ in $(0,\tfrac12)$, there holds
	\begin{equation*}
	\Big|\Big\{
		\pm\big(\boldsymbol \mu^{\pm}-u(\cdot, t)\big)\ge \xi\boldsymbol \om
		\Big\}\cap K_{\varrho}(x_o)\Big|
		\ge\al \big|K_{\varrho}\big|\quad\mbox{ for all $t\in\big(t_o-\dl(\sig\xi\boldsymbol \om)^{2-p}\varrho^{sp}, t_o\big]$,}
	\end{equation*}
Let $\theta=\dl(\sig\xi\boldsymbol \om)^{2-p}$.
There exists
 $\boldsymbol \gm>0$ depending only on the data,  such that 
  either $$ \Big( \frac{\rho}{R}\Big)^{\frac{sp}{p-1}} {\rm Tail}\big(\big(u - \boldsymbol \mu^{\pm}\big)_{\pm}; \mathcal{Q}\big) >\sig\xi\boldsymbol\om,$$ or
\begin{equation*}
	\Big|\Big\{
	\pm\big(\boldsymbol \mu^{\pm}-u\big)\le \sig \xi\boldsymbol \om \Big\}\cap Q_{\rho}(\theta)\Big|
	\le \boldsymbol\gm \frac{\sig^{p-1}}{\dl\al} |Q_{\rho}(\theta)|,
\end{equation*}
provided $Q_{2\rho}(\theta)$ is included in $\mathcal{Q}$.
\end{lemma}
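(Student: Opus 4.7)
The plan is to exploit the mixed-sign truncation term $w_{\pm}(x)\,w_{\mp}^{p-1}(y)$ on the left-hand side of the second energy estimate in Corollary~\ref{Cor:1}. This term provides a direct measure-shrinking mechanism, bypassing the logarithmic estimate invoked in the classical theory of the local parabolic $p$-Laplacian. It suffices to treat the super-solution case with $\boldsymbol\mu^{-}=0$; place the vertex at $(0,0)$. Further, one may assume $\sigma\le 1/4$, since otherwise $\sigma^{p-1}$ is bounded below and the conclusion is trivial after enlarging the generic constant.

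Apply the second estimate of Corollary~\ref{Cor:1} with truncation level $k=2\sigma\xi\boldsymbol\om$, inner radius $r=\rho$, outer radius $R'=2\rho$, and time depth $S=\theta\rho^{sp}$; the hypothesis of the lemma governs exactly this time slab. With this choice, pointwise $w_{-}=(2\sigma\xi\boldsymbol\om-u)_{+}\le 2\sigma\xi\boldsymbol\om$; on the target set $\{u\le\sigma\xi\boldsymbol\om\}$ one has $w_{-}\ge\sigma\xi\boldsymbol\om$; and on the hypothesized set $\{u(\cdot,t)\ge\xi\boldsymbol\om\}$ one has $w_{+}=(u-2\sigma\xi\boldsymbol\om)_{+}\ge\tfrac12\xi\boldsymbol\om$ since $\sigma\le 1/4$. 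Using $|x-y|\le 2\rho\sqrt{N}$ for $x,y\in K_\rho$ and the hypothesis (for every $t$ in the slab), the mixed term is bounded below by
\begin{equation*}
\frac{1}{\rho^{N+sp}}\iiint w_{-}(x,t)\,w_{+}^{p-1}(y,t)\,\dy\,\dx\,\dt
\;\ge\;\boldsymbol\gm\,\frac{\alpha\,\sigma\,(\xi\boldsymbol\om)^p}{\rho^{sp}}\bigl|\{u\le\sigma\xi\boldsymbol\om\}\cap Q_\rho(\theta)\bigr|.
\end{equation*}

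For the right-hand side of the energy inequality, the pointwise bound $w_{-}\le 2\sigma\xi\boldsymbol\om$ produces bulk contributions of order $\sigma^p(\xi\boldsymbol\om)^p\rho^{-sp}|Q_\rho(\theta)|$, while the initial-data term $\int_{K_{2\rho}} w_{-}^2(x,-S)\,\dx$ is at most $(2\sigma\xi\boldsymbol\om)^2|K_{2\rho}|$, which rewrites as $\boldsymbol\gm\,\delta^{-1}(\sigma\xi\boldsymbol\om)^p\rho^{-sp}|Q_\rho(\theta)|$ after using the intrinsic scale $\theta=\delta(\sigma\xi\boldsymbol\om)^{2-p}$. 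The nonlocal tail term is disposed of via the stated alternative: assuming $(\rho/R)^{sp/(p-1)}{\rm Tail}((u-\boldsymbol\mu^{-})_{-};\mathcal{Q})\le\sigma\xi\boldsymbol\om$, it is absorbed into the previous bulk contributions. Comparing the lower bound on the mixed term with this upper bound and cancelling the common factor $(\xi\boldsymbol\om)^p\rho^{-sp}$ yields the asserted measure estimate, with the surviving factor $\sigma^{p-1}/(\delta\alpha)$ arising as the ratio of $\sigma^p$ on the right to $\sigma$ on the left.

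The main delicacy is the bookkeeping of the $\sigma$-dependence. The mixed term on the left delivers a single power of $\sigma$ via $w_{-}$, whereas every term on the right must scale uniformly as $\sigma^p$; the initial-data term carries $\sigma^2$ on its face, and it is only through the intrinsic scaling $\theta\sim(\sigma\xi\boldsymbol\om)^{2-p}$ that it rebalances to the correct $\sigma^p$ (producing the $\delta^{-1}$ factor). The structural role of the mixed term is precisely this: the bilinear feature of the fractional diffusion provides the shrinking for free, without recourse to any logarithmic estimate, which is what makes the present unified treatment possible across all $p>1$.
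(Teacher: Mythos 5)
Your proposal is correct and follows essentially the same route as the paper: it tests the energy estimate at a level comparable to $\sigma\xi\boldsymbol\om$ in $K_{2\rho}\times(t_o-\theta\rho^{sp},t_o]$, bounds the mixed-sign term from below on the set $\{\pm(\boldsymbol\mu^{\pm}-u)\ge\xi\boldsymbol\om\}$ using the slice-wise measure hypothesis, and absorbs the bulk, tail (via the stated alternative) and initial-data terms into $\boldsymbol\gm\,\delta^{-1}\sigma^{p}(\xi\boldsymbol\om)^{p}\rho^{-sp}|Q_\rho(\theta)|$. Your choice $k=2\sigma\xi\boldsymbol\om$ with the reduction to $\sigma\le\tfrac14$ is only a cosmetic variant of the paper's $k=\sigma\xi\boldsymbol\om$ followed by constant adjustment, and the bookkeeping of the $\sigma$- and $\delta$-powers is exactly as in the paper.
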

\begin{proof}
It suffices to show the case of super-solutions with $\boldsymbol \mu^-=0$.
For simplicity, we assume $(x_o,t_o)=(0,0)$.
 We  employ the energy estimate of Proposition~\ref{Prop:2:1} in $K_{2\rho}\times (-\theta\rho^p, 0]$ 
with the truncation
\begin{equation*}
	w_- =(u-k)_- \quad \text{and}\quad k= \sig \xi\boldsymbol \om,
\end{equation*}
 and introduce
 a  cutoff function $\z$ in $K_{ 2\varrho}$ (independent of $t$) that is equal to $1$ in $K_{\varrho}$ and vanishes
outside $K_{\frac32\varrho}$, such that $|D\z|\le\varrho^{-1}$.
Then, we obtain from Proposition~\ref{Prop:2:1} that
\begin{align*}
	\iint_{Q_\rho(\theta)}& w_{-}(y,t) \,\dy\dt \bigg(\int_{ K_{2\rho}}  \frac{w^{p-1}_+(x,t)}{|x-y|^{N+sp}}\,\dx\bigg)\\
	& \le
	\boldsymbol\gm\int_{-\theta\rho^p}^{0}\int_{K_{2\rho}}\int_{K_{2\rho}}\max\big\{w_{-}(x,t), w_{-}(y,t)\big\}^p \frac{|\z(x) - \z(y)|^p}{|x-y|^{N+sp}}\,\dx\dy\dt\\
	&\quad+\boldsymbol\gm\iint_{Q_{2\rho}(\theta)} \z^p w_{-}(x,t)\,\dx\dt \bigg(\essup_{\substack{x\in K_{\frac32\rho}\\t\in(-\theta\rho^p,0)}} \int_{\rn\setminus K_{2\rho}}\frac{ w_{-}^{p-1}(y,t)}{|x-y|^{N+sp}}\,\dy\bigg)\\
	&\quad +
	\int_{K_{2\varrho} } w_-^2(x,-\theta\rho^p) \,\dx.
\end{align*}

The various terms on the right-hand side are estimated as in Lemma~\ref{Lm:DG:1}. Indeed, after enforcing
$$ \Big( \frac{\rho}{R}\Big)^{\frac{sp}{p-1}}  {\rm Tail}(u_{-}; \mathcal{Q})  \le \sig\xi\boldsymbol\om,$$
they are bounded by
\[
\boldsymbol\gm \frac{(\sig\xi\boldsymbol\om)^p}{\dl\rho^{sp}}|Q_{\rho}(\theta)|.
\]

The left-hand side is estimated by extending the integrals over smaller sets and by using the given measure theoretical information:
\begin{align*}
\iint_{Q_\rho(\theta) }&w_{-}(y,t) \chi_{\{u(y,t)\le\frac12\sig\xi\boldsymbol\om\}} \,\dy\dt \bigg(\int_{ K_{2\rho}}  \frac{w^{p-1}_+(x,t)\chi_{\{u(x,t)\ge\xi\boldsymbol\om\} }}{|x-y|^{N+sp}}\,\dx\bigg)\\
&\ge \tfrac12\sig\xi\boldsymbol\om\Big|\Big\{u\le\tfrac12\sig\xi\boldsymbol\om\Big\}\cap Q_\rho(\theta)\Big| \bigg( \frac{  (\tfrac12\xi\boldsymbol\om)^{p-1}\al |K_\rho|}{(4\rho)^{N+sp}} \bigg)\\
&=\boldsymbol\gm\frac{(\xi\boldsymbol\om)^p\al\sig}{\rho^{sp}} \Big|\Big\{u\le\tfrac12\sig\xi\boldsymbol\om\Big\}\cap Q_\rho(\theta)\Big|.
\end{align*}

Combining these estimates and properly adjusting relevant constants, we conclude the proof.
\end{proof}


\section{Proof of Theorem~\ref{Thm:1:1}: $1<p\le 2$}

\subsection{Expansion of Positivity}
Suppose the cylinder $\mathcal{Q}$ and the numbers $\boldsymbol \mu^{\pm}$ and $\boldsymbol \om$ are defined as in Section~\ref{S:prelim}. The key ingredient of the reduction of oscillation lies in the following  expansion of positivity valid for $1<p\le 2$.
\begin{proposition}\label{Prop:expansion}
Let $u$ be a locally bounded, local, weak sub(super)-solution to \eqref{Eq:1:1} -- \eqref{Eq:K} in $E_T$.
Suppose for some constants  $\al,\xi \in(0,1)$, there holds
	\begin{equation*}
		\Big|\Big\{\pm\big(\boldsymbol \mu^{\pm}-u(\cdot, t_o)\big)\ge \xi\boldsymbol\om \Big\}\cap K_{\varrho}(x_o) \Big|
		\ge
		\al \big|K_\varrho \big|.
	\end{equation*}
Then there exist constants $\dl,\,\eta\in(0,1)$ depending only on the data and $\al$, such that 
either
\begin{equation*}
	\Big( \frac{\rho}{R}\Big)^{\frac{sp}{p-1}} {\rm Tail}\big(\big(u - \boldsymbol \mu^{\pm}\big)_{\pm}; \mathcal{Q}\big) >\eta \xi \boldsymbol\om,
\end{equation*}
 or
\begin{equation*}
	\pm\big(\boldsymbol \mu^{\pm}-u\big)\ge\eta\xi\boldsymbol\om
	\quad
	\mbox{a.e.~in $ K_{2\varrho}(x_o) \times\big( t_o+\tfrac12 \dl (\xi\boldsymbol\om)^{2-p}\varrho^{sp},
	t_o+\dl (\xi\boldsymbol\om)^{2-p}\varrho^{sp}\big],$}
\end{equation*}
provided 
\[
K_{4\rho}(x_o)\times\big(t_o, t_o+\dl (\xi\boldsymbol\om)^{2-p}\varrho^{sp}\big]\subset \mathcal{Q}.
\]
Moreover, we have  $\dl\approx\al^{p+1}$ and $\eta\approx\al^q$ for some $q>1$ depending on the data.
\end{proposition}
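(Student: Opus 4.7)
The plan is to chain the three modules of Section~\ref{S:prelim}: first the forward-in-time propagation of positivity (Lemma~\ref{Lm:3:1}), then the measure shrinking (Lemma~\ref{Lm:3:2}), and finally the DeGiorgi-type upgrade (Lemma~\ref{Lm:DG:1}). Each module carries its own either--or tail alternative; all of them will be collapsed into the single tail condition in the conclusion by choosing $\eta$ small enough. I treat only the super-solution case with $\boldsymbol\mu^-=0$; the other case is symmetric.

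I would first apply Lemma~\ref{Lm:3:1} to the measure hypothesis at time $t_o$, obtaining constants $\varep_1\approx\alpha$ and $\delta_1\approx\alpha^{p+1}$ (depending only on the data and $\alpha$) such that, modulo the first tail alternative,
\[
	\bigl|\{u(\cdot,t)\ge\varep_1\xi\boldsymbol\om\}\cap K_\varrho(x_o)\bigr|\ge\tfrac{\alpha}{2}|K_\varrho|
	\quad\forall\,t\in\bigl(t_o,\,t_o+\delta_1(\xi\boldsymbol\om)^{2-p}\varrho^{sp}\bigr].
\]
Setting $\delta:=\delta_1$ fixes the time window appearing in the conclusion. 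After enlarging the base ball from $K_\varrho$ to $K_{2\varrho}$, at the expense of only a fixed geometric factor in the density, I would relocate the vertex to $\widetilde t_o:=t_o+\delta(\xi\boldsymbol\om)^{2-p}\varrho^{sp}$ and invoke Lemma~\ref{Lm:3:2} at level $\varep_1\xi\boldsymbol\om$ with a parameter $\sigma\in(0,1)$ to be fixed; its cylinder requirement is met by the hypothesis $K_{4\varrho}\subset\mathcal{Q}$. The compatibility of the intrinsic time scales --- that the backward cylinder of size $\delta_2(\sigma\varep_1\xi\boldsymbol\om)^{2-p}\varrho^{sp}$ demanded by Lemma~\ref{Lm:3:2} fit inside the forward window of Step~1 --- reduces to $\delta_2(\sigma\varep_1)^{2-p}\lesssim\delta_1$. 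In the range $1<p\le 2$ this is immediate from $(\sigma\varep_1)^{2-p}\le 1$; it is precisely this monotonicity that fails for $p>2$ and motivates the separate treatment in the next section.

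Lemma~\ref{Lm:3:2} then delivers a measure bound of the form
\[
	\bigl|\{u\le\sigma\varep_1\xi\boldsymbol\om\}\cap Q_{2\varrho}(\theta)\bigr|
	\le \boldsymbol\gm\,\frac{\sigma^{p-1}}{\delta_2\alpha}\,\bigl|Q_{2\varrho}(\theta)\bigr|,
	\quad \theta=\delta_2(\sigma\varep_1\xi\boldsymbol\om)^{2-p},
\]
on the backward cylinder at $\widetilde t_o$, which I would feed into Lemma~\ref{Lm:DG:1} at the reduced truncation level $\sigma\varep_1\xi\boldsymbol\om$. The DeGiorgi threshold is $\nu\approx\delta_2^{q_0}$, depending only on the data, and the crucial algebraic gain is the factor $\sigma^{p-1}$, available because $p>1$. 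Taking $\sigma\approx\alpha^{q_1}$ for a suitable $q_1>1$ forces $\boldsymbol\gm\sigma^{p-1}/(\delta_2\alpha)\le\nu$, and Lemma~\ref{Lm:DG:1} then yields the pointwise bound $u\ge\tfrac12\sigma\varep_1\xi\boldsymbol\om$ on a sub-cylinder whose time slab contains $(t_o+\tfrac12\delta(\xi\boldsymbol\om)^{2-p}\varrho^{sp},\widetilde t_o]$ and whose spatial footprint contains $K_{2\varrho}$. Setting $\eta:=\tfrac12\sigma\varep_1\approx\alpha^q$ with the requisite $q>1$ closes the argument, the several incidental tail conditions accumulated along the way being all dominated by the single one displayed in the statement.

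The principal difficulty is not any individual step but their coordination: the constants $\delta$, $\delta_2$, $\sigma$, $\varep_1$ must be selected simultaneously so that (i) the reduced-level backward cylinders of Lemmas~\ref{Lm:3:2} and~\ref{Lm:DG:1} telescope inside the original-level forward window of Lemma~\ref{Lm:3:1}, (ii) the DeGiorgi measure threshold is met via the $\sigma^{p-1}$ slack from Lemma~\ref{Lm:3:2} (which is why $p>1$ is indispensable), and (iii) the several tail alternatives collapse into the single $\eta\xi\boldsymbol\om$ in the conclusion. What makes the argument close in a single pass for $1<p\le 2$ is precisely the sign of $2-p$ in the intrinsic scaling $(\xi\boldsymbol\om)^{2-p}$.
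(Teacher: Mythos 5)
Your chain of modules is exactly the paper's: Lemma~\ref{Lm:3:1} to propagate the time-$t_o$ measure information forward, Lemma~\ref{Lm:3:2} at the reduced level $\sigma\varepsilon_1\xi\boldsymbol\om$ (with the time-scale compatibility resting on $(\sigma\varepsilon_1)^{2-p}\le 1$, i.e.\ on $p\le 2$), then Lemma~\ref{Lm:DG:1} with the threshold met through the $\sigma^{p-1}$ slack, and finally all tail alternatives absorbed into one by taking $\eta$ comparable to $\sigma\varepsilon_1$. The constant dependences you record ($\varepsilon_1\approx\alpha$, $\delta\approx\alpha^{p+1}$, $\nu\approx\delta^{q}$, $\sigma$ a power of $\alpha$) also match.

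There is, however, one concrete slip in the spatial bookkeeping, and it concerns precisely the ``expansion'' in the conclusion. You enlarge the propagated measure information only from $K_\varrho$ to $K_{2\varrho}$ and then run Lemma~\ref{Lm:3:2} and Lemma~\ref{Lm:DG:1} at radius $2\varrho$. But Lemma~\ref{Lm:DG:1} concludes positivity only on the cylinder of \emph{half} the radius: feeding it the measure-shrinking estimate on $Q_{2\varrho}(\theta)$ yields $u\ge\tfrac12\sigma\varepsilon_1\xi\boldsymbol\om$ only on $Q_{\varrho}(\theta)$, i.e.\ with spatial footprint $K_{\varrho}$, not $K_{2\varrho}$ as you assert and as the proposition requires. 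As written, your argument therefore produces no spatial expansion at all. The repair is what the paper does: degrade the measure information to the ball $K_{4\varrho}$ (density $4^{-N}\alpha$, harmless since all constants may depend on $\alpha$) --- either at the initial time before invoking Lemma~\ref{Lm:3:1}, as in the paper, or after it, as in your set-up --- and then apply Lemma~\ref{Lm:3:2} and Lemma~\ref{Lm:DG:1} at radius $4\varrho$, so that the DeGiorgi conclusion lands on $K_{2\varrho}$; this is also why the proviso involves $K_{4\varrho}$. A secondary remark: your claim that the final time slab ``contains $(t_o+\tfrac12\delta(\xi\boldsymbol\om)^{2-p}\varrho^{sp},\widetilde t_o]$'' does not follow from a single application when $p<2$ and $\sigma\varepsilon_1$ is small, since the DeGiorgi slab has height proportional to $(\sigma\varepsilon_1\xi\boldsymbol\om)^{2-p}$; one either slides the vertex $\widetilde t_o$ over the admissible range or readjusts the constant $\delta$ in the statement, which is the content of the paper's ``properly adjusting relevant constants'' and should be made explicit in your write-up as well.
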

\begin{proof}
Assuming $(x_o,t_o)=(0,0)$ and $\boldsymbol\mu^{-}=0$ for simplicity, it suffices to deal with super-solutions.
 Rewriting the measure theoretical information at the initial time $t_o=0$ in the larger ball $K_{4\rho}$ and replacing $\al$ by $4^{-N}\al$, we can enforce
 $$ \Big( \frac{\rho}{R}\Big)^{\frac{sp}{p-1}}   {\rm Tail}(u_{-}; \mathcal{Q}) \le\xi\boldsymbol \om,$$
 and apply Lemma~\ref{Lm:3:1} to obtain $\dl, \varep\in(0,1)$, such that 
	\begin{equation*}
	\Big|\Big\{
	 u(\cdot, t) \ge \varep \xi\boldsymbol \om\Big\} \cap K_{4\varrho} \Big|
	\ge\frac{\al}2 4^{-N} |K_{4\varrho}|
	\quad\mbox{ for all $t\in\big(0, \dl(\xi\boldsymbol \om)^{2-p}(4\varrho)^{sp}\big]$.}
\end{equation*}

This measure theoretical information for each slice of the time interval in turn allows us to apply Lemma~\ref{Lm:3:2} with $\xi$ and $\al$ there replaced by $\varep\xi$ and $\tfrac12 4^{-N}\al$, since $\sig\in(0,1)$ and 
\[
\dl(\sig\varep\xi\boldsymbol \om)^{2-p}\le \dl(\xi\boldsymbol \om)^{2-p}. 
\]
Note that the above inequality used the fact that $p\le2$.
Letting $\nu$ be determined in Lemma~\ref{Lm:DG:1}  in terms of the data and $\dl$, we further choose $\sig$ according to Lemma~\ref{Lm:3:2} to satisfy
\[
\boldsymbol\gm \frac{\sig^{p-1}}{\dl\al} <\nu, \quad\text{i.e.}\quad \sig\le\Big(\frac{\nu\dl\al}{\boldsymbol\gm}\Big)^{\frac1{p-1}}.
\]
Enforcing
\begin{equation*}
	\Big( \frac{\rho}{R}\Big)^{\frac{sp}{p-1}} {\rm Tail}\big(  u_{-}; \mathcal{Q}\big) \le \sig\varep \xi \boldsymbol\om,
\end{equation*}
such a choice of $\sig$ permits us to apply Lemma~\ref{Lm:DG:1} and conclude that
\[
u\ge\tfrac12\sig\varep\xi\boldsymbol\om\quad\text{a.e. in}\>K_{2\varrho}  \times\big(  \dl (\xi\boldsymbol\om)^{2-p}(4\varrho)^{sp}-\dl(\sig\varep\xi\boldsymbol\om)^{2-p}(2\rho)^{sp}, \dl (\xi\boldsymbol\om)^{2-p}(4\varrho)^{sp}\big].
\]
The proof is completed by defining $\eta=\sig\varep$ and properly adjusting relevant constants in dependence of the data.
\end{proof}

Based on Proposition~\ref{Prop:expansion}, the remaining part is devoted to the proof of Theorem~\ref{Thm:1:1} for $1<p\le 2$. All constants determined in the course of the proof are stable as $p\to2$.
\subsection{The First Step} 
For some $Q_{\widetilde{R}}\subset E_T$ we introduce
\[
\boldsymbol\om=2\essup_{Q_{\widetilde{R}}} |u| +{\rm Tail}(u; Q_{\widetilde{R}})
\]
and $Q_o=Q_R(\boldsymbol\om^{2-p})$. 
By properly shrinking $R$, we may assume that $Q_o\subset Q_{\widetilde{R}}$ and
set
\begin{equation*}
	\boldsymbol \mu^+=\essup_{Q_o}u,
	\qquad
	\boldsymbol\mu^-=\essinf_{Q_o}u.
\end{equation*}
Without loss of generality, we take $(x_o,t_o)=(0,0)$.

Then the following intrinsic relation holds true:
\begin{equation}\label{Eq:start-cylinder}
 \essosc_{Q_R(\boldsymbol\om^{2-p})}u\le\boldsymbol\om.
\end{equation}
The choice of the reference cylinder $Q_{\widetilde{R}}$ is made to verify 
\eqref{Eq:start-cylinder}, on which the subsequent arguments are based. 

Let $\dl \in(0,1)$ be determined in Proposition~\ref{Prop:expansion} with $\al=\tfrac12$. 
For some $c\in(0,\tfrac14)$ to be chosen, define $$\tau:=\dl(\tfrac14 \boldsymbol\om)^{2-p}( c R)^{sp}$$ and consider two alternatives
\begin{equation*}
\left\{
\begin{array}{ll}
	\Big|\Big\{u\big(\cdot,-\tau\big)-\boldsymbol\mu^->\tfrac14 \boldsymbol\om\Big\}
	\cap 
	K_{c R}\Big|
	\ge
	\tfrac12 |K_{c R}|,\\[5pt]

	\Big|\Big\{\boldsymbol\mu^+ - u\big(\cdot,-\tau\big)>\tfrac14 \boldsymbol\om\Big\}
	\cap 
	K_{c R}\Big|
	\ge
	\tfrac12 |K_{c R}|.
\end{array}\right.
\end{equation*}
Assuming $\boldsymbol\mu^+ - \boldsymbol\mu^-\ge\tfrac12\boldsymbol\om$, one of the two alternatives must hold.
Whereas the case $\boldsymbol\mu^+ - \boldsymbol\mu^-<\tfrac12\boldsymbol\om$ will be trivially incorporated into the forthcoming oscillation estimate \eqref{Eq:osc:0}.

Let us suppose the first alternative holds for instance. An appeal to
Proposition~\ref{Prop:expansion} with $\al=\tfrac12$, $\xi=\tfrac14$ and $\rho=cR$ determines $\eta\in(0,\tfrac12)$ and yields that
either
\begin{equation}\label{Eq:tail:0}
	c^{\frac{sp}{p-1}} {\rm Tail}\big( \big(u-\boldsymbol \mu^{-}\big)_{-}; Q_o\big) > \eta  \boldsymbol\om,
\end{equation}
 or
\begin{equation*}
	u-\boldsymbol \mu^{-} \ge\eta\boldsymbol\om
	\quad\text{a.e. in}\> Q_{cR}(\dl(\tfrac14 \boldsymbol\om)^{2-p}),
\end{equation*}
which, thanks to \eqref{Eq:start-cylinder}, gives the reduction of oscillation
\begin{equation}\label{Eq:osc:0}
\essosc_{Q_{cR}(\dl(\frac14 \boldsymbol\om)^{2-p})} u\le \big(1-\eta)\boldsymbol\om=:\boldsymbol\om_1.
\end{equation}
The number $c$ is chosen to ensure that \eqref{Eq:tail:0} does not happen. Indeed, we may first estimate 
\begin{equation}\label{Eq:tail:1}
{\rm Tail}\big( \big(u-\boldsymbol \mu^{-}\big)_{-}; Q_o\big) \le \boldsymbol\gm\boldsymbol\om.
\end{equation}
This can be seen by the definitions of $\boldsymbol\om$ and the tail,
\begin{align*}
\big[{\rm Tail}&\big( \big(u-\boldsymbol \mu^{-}\big)_{-}; Q_o\big)\big]^{p-1} =R^{sp} \essup_{-\boldsymbol\om^{2-p}R^{sp}<t<0}\int_{\rn\setminus K_{R}}\frac{\big(u-\boldsymbol \mu^{-}\big)_{-}^{p-1}}{|x|^{N+sp}}\,\dx\\
&\le \boldsymbol\gm \boldsymbol\om^{p-1}+\boldsymbol\gm R^{sp}
\essup_{-\boldsymbol\om^{2-p}R^{sp}<t<0}
  \int_{\rn\setminus K_R} \frac{u_{-}^{p-1}}{|x|^{N+sp}}\,\dx\\
&=\boldsymbol\gm \boldsymbol\om^{p-1}+\boldsymbol\gm R^{sp} \essup_{-\boldsymbol\om^{2-p}R^{sp}<t<0}
\bigg[ \int_{\rn\setminus K_{\widetilde{R}}} \frac{u_{-}^{p-1}}{|x|^{N+sp}}\,\dx+
 \int_{K_{\widetilde{R}}\setminus K_{R}} \frac{u_{-}^{p-1}}{|x|^{N+sp}}\,\dx\bigg]\\
&\le\boldsymbol\gm\boldsymbol\om^{p-1}.
\end{align*}
Then, using \eqref{Eq:tail:1} we choose
\begin{equation}\label{Eq:c:0}
c^{\frac{sp}{p-1}}\boldsymbol\gm\boldsymbol\om\le \eta  \boldsymbol\om,\quad\text{i.e.}\quad c\le \Big(\frac{\eta}{\boldsymbol\gm}\Big)^{\frac{p-1}{sp}},
\end{equation}
such that \eqref{Eq:tail:0} does not occur.
Note that \eqref{Eq:c:0} is not the final choice of $c$ yet and it is subject to a further smallness requirement. 

Next we set $R_1=\lm R$ for some $\lm\le c$ to verify the set inclusion
\begin{equation}\label{Eq:lm:0}
Q_{R_1}(\boldsymbol\om_1^{2-p}) \subset Q_{cR}(\dl(\tfrac14 \boldsymbol\om)^{2-p}),\quad\text{i.e.}\quad \lm\le 4^{\frac{p-2}{p}}\dl^{\frac1p}c.
\end{equation}
As a result of this inclusion and \eqref{Eq:osc:0} we obtain
\begin{equation*}
 \essosc_{Q_{R_1}(\boldsymbol\om_1^{2-p})}u\le\boldsymbol\om_1,
\end{equation*}
which plays the role of \eqref{Eq:start-cylinder} in the next stage.

\subsection{The Induction}
Now we may proceed by induction. 
 Suppose up to $i=0,1,\cdots, j$, we have built
\begin{equation*}
\left\{
	\begin{array}{c}
	\dsty R_o=R,\quad R_i=\lm R_{i-1},
	\quad 
	\boldsymbol\om_i=(1-\eta)\boldsymbol\om_{i-1},
	\quad 
	Q_i=Q_{R_i}(\boldsymbol\om_i^{2-p}), 
	\\[5pt]
	\dsty\boldsymbol\mu_i^+=\essup_{Q_i}u,
	\quad
	\boldsymbol\mu_i^-=\essinf_{Q_i}u,
	\quad
	\essosc_{Q_i}u\le\boldsymbol\om_i.
	\end{array}
\right.
\end{equation*}
The induction argument will show that the above oscillation estimate continues to hold for the $(j+1)$-th step.

Let  $\dl$ be fixed as before, whereas $c\in(0,1)$ is subject to a further choice. To reduce the oscillation in the next stage, we basically repeat what has been done in the first step, now with $\boldsymbol\mu^{\pm}_j$, $\boldsymbol\om_j$, $R_j$, $Q_j$, etc. 
In fact, we define $$\tau:=\dl(\tfrac14 \boldsymbol\om_j)^{2-p}( c R_j)^{sp}$$ and consider two alternatives
\begin{equation*}
\left\{
\begin{array}{ll}
	\Big|\Big\{u\big(\cdot,-\tau\big)-\boldsymbol\mu^-_j>\tfrac14 \boldsymbol\om_j\Big\}
	\cap 
	K_{c R_j}\Big|
	\ge
	\tfrac12 |K_{c R_j}|,\\[5pt]

	\Big|\Big\{\boldsymbol\mu^+_j - u\big(\cdot,-\tau\big)>\tfrac14 \boldsymbol\om_j\Big\}
	\cap 
	K_{c R_j}\Big|
	\ge
	\tfrac12 |K_{c R_j}|.
\end{array}\right.
\end{equation*}
Like in the first step, we may assume $\boldsymbol\mu^+_j - \boldsymbol\mu^-_j\ge\tfrac12\boldsymbol\om_j$, so that one of the two alternatives must hold.
Otherwise the case $\boldsymbol\mu^+_j - \boldsymbol\mu^-_j<\tfrac12\boldsymbol\om_j$ can be trivially incorporated into the forthcoming oscillation estimate \eqref{Eq:osc:j}.

Let us suppose the first case holds for instance. An application of
Proposition~\ref{Prop:expansion} in $Q_j$, with $\al=\tfrac12$, $\xi=\tfrac14$ and $\rho=cR_j$ yields (for the same $\eta$ as before) that
either
\begin{equation}\label{Eq:tail:j}
	c^{\frac{sp}{p-1}} {\rm Tail}\big( \big(u-\boldsymbol \mu_j^{-}\big)_{-}; Q_j\big) > \eta  \boldsymbol\om_j,
\end{equation}
 or
\begin{equation*}
	u-\boldsymbol \mu_j^{-} \ge\eta\boldsymbol\om_j
	\quad\text{a.e. in}\> Q_{cR_j}(\dl(\tfrac14 \boldsymbol\om_j)^{2-p}),
\end{equation*}
which, thanks to the $j$-th induction assumption, gives the reduction of oscillation
\begin{equation}\label{Eq:osc:j}
\essosc_{Q_{cR_j}(\dl(\frac14 \boldsymbol\om_j)^{2-p})} u\le \big(1-\eta\big)\boldsymbol\om_j=:\boldsymbol\om_{j+1}.
\end{equation}

The final choice of $c$ is made to ensure that \eqref{Eq:tail:j} does not happen, independent of $j$. This hinges upon the following tail estimate
\begin{equation}\label{Eq:tail-est}
 {\rm Tail}\big( \big(u-\boldsymbol \mu_j^{-}\big)_{-}; Q_j\big)  \le \boldsymbol\gm\boldsymbol\om_j.
\end{equation}
 To prove this, we first rewrite the tail as follows:
\begin{align*}
\big[{\rm Tail}&\big( \big(u-\boldsymbol \mu_j^{-}\big)_{-}; Q_j\big)\big]^{p-1}=R_j^{sp}\essup_{-\boldsymbol\om_j^{2-p}R_j^{sp}<t<0}\int_{\rn\setminus K_j} \frac{\big(u-\boldsymbol \mu_j^{-}\big)_{-}^{p-1}}{|x|^{N+sp}}\,\dx\\
&=R_j^{sp}\essup_{-\boldsymbol\om_j^{2-p}R_j^{sp}<t<0} \bigg[ \int_{\rn\setminus K_R} \frac{\big(u-\boldsymbol \mu_j^{-}\big)_{-}^{p-1}}{|x|^{N+sp}}\,\dx+
\sum_{i=1}^{j}\int_{K_{i-1}\setminus K_{i}} \frac{\big(u-\boldsymbol \mu_j^{-}\big)_{-}^{p-1}}{|x|^{N+sp}}\,\dx\bigg].
\end{align*}
The first integral is estimated by using the definition of $\boldsymbol \om$. Namely, for any $t\in (-\boldsymbol\om_j^{2-p}R_j^{sp},0)$,
\begin{align*}
\int_{\rn\setminus K_R} \frac{\big(u-\boldsymbol \mu_j^{-}\big)_{-}^{p-1}}{|x|^{N+sp}}\,\dx
&\le 
\boldsymbol \gm\int_{\rn\setminus K_R} \frac{|\boldsymbol \mu_j^{-}|^{p-1}+u_{-}^{p-1}}{|x|^{N+sp}}\,\dx\\
&\le\boldsymbol \gm\frac{\boldsymbol \om^{p-1}}{R^{sp}}+\boldsymbol \gm\int_{K_{\widetilde{R}}\setminus K_R} \frac{u_{-}^{p-1}}{|x|^{N+sp}}\,\dx+\boldsymbol \gm\int_{\rn\setminus K_{\widetilde{R}}} \frac{u_{-}^{p-1}}{|x|^{N+sp}}\,\dx\\
&\le\boldsymbol \gm\frac{\boldsymbol \om^{p-1}}{R^{sp}}.
\end{align*}
Whereas the second integral is estimated by using the simple fact that, for $i=1,2,\cdots,j$,
$$\big(u-\boldsymbol \mu_j^{-}\big)_{-}\le \boldsymbol \mu_j^{-} - \boldsymbol \mu_{i-1}^{-}\le\boldsymbol \mu_j^{+} - \boldsymbol \mu_{i-1}^{-}\le\boldsymbol \mu_{i-1}^{+} - \boldsymbol \mu_{i-1}^{-} \le \boldsymbol \om_{i-1}\quad\text{a.e. in}\>Q_{i-1}.$$
 Namely, for any $t\in (-\boldsymbol\om_j^{2-p}R_j^{sp},0)$,
\begin{align*}
\int_{K_{i-1}\setminus K_{i}} \frac{\big(u-\boldsymbol \mu_j^{-}\big)_{-}^{p-1}}{|x|^{N+sp}}\,\dx
\le \boldsymbol \gm \frac{\boldsymbol \om_{i-1}^{p-1}}{R_i^{sp}}.
\end{align*} 
Combine them and further estimate the tail by 
\begin{equation*}
\begin{aligned}
\big[{\rm Tail}\big( \big(u-\boldsymbol \mu_j^{-}\big)_{-}; Q_j\big)\big]^{p-1}&\le\boldsymbol \gm R_j^{sp} \sum_{i=1}^{j}  \frac{\boldsymbol \om_{i-1}^{p-1}}{R_i^{sp}}\\
&=\boldsymbol \gm \boldsymbol \om_{j}^{p-1} \sum_{i=1}^{j} (1 - \eta)^{(j-i+1)(1-p)}\lm^{(j-i)sp}.
\end{aligned}
\end{equation*}
The summation in the last display is bounded by $(1-\eta)^{1-p}$ if we restrict the choice of $\lm$ by $$(1-\eta)^{1-p}\lm^{sp}\le\tfrac12,\quad\text{i.e.}\quad\lm\le 2^{-\frac1{sp}}(1-\eta)^{\frac{p-1}{sp}}.$$
Consequently, the tail estimate \eqref{Eq:tail-est} is proven and \eqref{Eq:tail:j} does not happen, if we choose $c$ to verify
\begin{equation}\label{Eq:c:j}
c^{sp}\boldsymbol \gm \le \eta^{p-1},\quad\text{i.e.}\quad c\le \frac{1}{\boldsymbol\gm}\eta^{\frac{p-1}{sp}}.
\end{equation}
The final choice of $c$ is made out of the smaller one of \eqref{Eq:c:0} and \eqref{Eq:c:j}.

Let $R_{j+1}=\lm R_j$ for some $\lm\in(0,1)$ to verify the set inclusion
\begin{equation}\label{Eq:lm:j}
Q_{R_{j+1}}(\boldsymbol\om_{j+1}^{2-p}) \subset Q_{cR_j}(\dl(\tfrac12 \boldsymbol\om_j)^{2-p}),\quad\text{i.e.}\quad \lm\le 2^{\frac{p-2}{p}}\dl^{\frac1p}c.
\end{equation}
Note that the choice of $\lm$ in the last display may have been adjusted from the precious one in \eqref{Eq:lm:0} due to the possible change of $c$ made in \eqref{Eq:c:j}.
The final choice of $\lm$ is
\[
\lm=\min\Big\{ 2^{-\frac1{sp}}(1-\eta)^{\frac{p-1}{sp}}, 2^{\frac{p-2}{p}}\dl^{\frac1p}c\Big\}.
\]
As a result of the inclusion \eqref{Eq:lm:j} and \eqref{Eq:osc:j} we obtain
\begin{equation*}
 \essosc_{Q_{R_{j+1}}(\boldsymbol\om_{j+1}^{2-p})}u\le\boldsymbol\om_{j+1},
\end{equation*}
which completes the induction argument. From now on, the deduction of a H\"older modulus of continuity becomes quite standard; cf.~\cite[Chapter~III, Proposition~3.1]{DB}.

\section{Proof of Theorem~\ref{Thm:1:1}: $p> 2$}
As in the previous section, we first introduce  $Q_{\widetilde{R}}\subset E_T$,
\[
\boldsymbol\om=2\essup_{Q_{\widetilde{R}}} |u| +{\rm Tail}(u; Q_{\widetilde{R}})
\]
and $Q_o=Q_R(a\theta)$ for $\theta=(\frac14\boldsymbol\om)^{2-p}$ and some $a\in(0,1)$ to be determined. 
By properly shrinking $R$, we may assume that $Q_o\subset Q_{\widetilde{R}}$ and
set
\begin{equation*}
	\boldsymbol \mu^+=\essup_{Q_o}u,
	\qquad
	\boldsymbol\mu^-=\essinf_{Q_o}u.
\end{equation*}
Without loss of generality, we take $(x_o,t_o)=(0,0)$.

Then the following intrinsic relation holds:
\begin{equation}\label{Eq:start-cylinder:1}
\essosc_{Q_{R}(a(\frac14\boldsymbol\om)^{2-p})}u\le\boldsymbol\om.
\end{equation}
As before, the choice of $Q_{\widetilde{R}}$ is made to ensure \eqref{Eq:start-cylinder:1}, on which the subsequent arguments are based. 

Unlike the case $1<p\le2$, an expansion of positivity for the case $p>2$ requires addition technical complication. To deal with this case, we instead refine DiBenedetto's argument in \cite{DB86}: On one hand, the tail needs a great care in this intrinsic scaling scenario; on the other hand, we dispense with any kind of logarithmic estimate and just rely on the energy estimate.

\subsection{The First Alternative}\label{S:case-1}
In this section, 
we work with $u$ as a super-solution near its
infimum.
Furthermore, we assume 
\begin{equation}\label{Eq:mu-pm-}
	\boldsymbol\mu^+ -\boldsymbol\mu^- >\tfrac12\boldsymbol\om.
\end{equation}
The other case $\boldsymbol\mu^+ -\boldsymbol\mu^- \le\frac12\boldsymbol\om$,
will be considered later.

Suppose  $a,\, c\in(0,1)$ verify that $a>2c^{sp}$ for the moment (which will be confirmed in \eqref{Eq:a} and \eqref{Eq:c2:1.5}), 
and for some $\bar{t}\in\big(-a\theta R^{sp}+\theta(cR)^{sp},0\big]$, there holds
\begin{equation}\label{Eq:1st-alt-meas}
	\Big|\Big\{u\le\boldsymbol\mu^-+\tfrac14 \boldsymbol\om\Big\}
	\cap 
	(0,\bar{t})+Q_{cR}(\theta)\Big|\le \nu|Q_{cR}(\theta)|,
\end{equation}
where $\nu$ is the constant determined in Lemma~\ref{Lm:DG:1} (with $\dl=1$) in terms of the data.
According to Lemma~\ref{Lm:DG:1}  with   $\dl=1$, $\xi=\frac14$ and $\rho=cR$, we have either
\begin{equation}\label{Eq:tail2:0}
	c^{\frac{sp}{p-1}} {\rm Tail}\big( \big(u-\boldsymbol \mu^{-}\big)_{-}; Q_o\big) > \tfrac14   \boldsymbol\om,
\end{equation}
or
\begin{equation}\label{Eq:lower-bd}
	u\ge\boldsymbol \mu^-+\tfrac{1}8\boldsymbol\om
	\quad
	\mbox{a.e.~in $(0,\bar{t})+Q_{\frac12 cR}(\theta)$.}
\end{equation}

To proceed, we restrict $c$ so that \eqref{Eq:tail2:0} does not happen. Indeed, since, according to the definition of $\boldsymbol\om$, the tail can be easily estimated by (cf.~\eqref{Eq:tail:1})
\[
{\rm Tail}\big( \big(u-\boldsymbol \mu^{-}\big)_{-}; Q_o\big) \le \boldsymbol\gm\boldsymbol\om,
\]
we impose
\begin{equation}\label{Eq:c2:0}
c^{\frac{sp}{p-1}} \boldsymbol\gm\boldsymbol\om\le\tfrac14\boldsymbol\om,\quad\text{i.e.}\quad c\le\Big(\frac1{4\boldsymbol\gm}\Big)^{\frac{p-1}{sp}}.
\end{equation}
Note that this is not the final choice of $c$ yet and it is subject to further smallness requirements in the course of the proof.

The pointwise estimate in \eqref{Eq:lower-bd} at $t_*=\bar{t}- \theta(\tfrac12 cR)^{sp}$ allows us to apply Lemma~\ref{Lm:DG:initial:1} with $\rho=\tfrac12cR$ and obtain that for some $\xi_o\in(0,\tfrac18)$, either
\begin{equation}\label{Eq:tail2:1}
	(\tfrac12c)^{\frac{sp}{p-1}} {\rm Tail}\big( \big(u-\boldsymbol \mu^{-}\big)_{-}; Q_o\big) > \xi_o   \boldsymbol\om,
\end{equation}
or
\begin{equation}\label{Eq:lower-bd:1}
 u\ge\boldsymbol \mu^-+\tfrac12\xi_o \boldsymbol\om
	\quad
	\mbox{a.e.~in $K_{\frac14cR}\times\big(t_*, t_*+\nu_o(\xi_o\boldsymbol\om)^{2-p}(\tfrac12cR)^{sp}\big]$.}
\end{equation}
The number $\xi_o$ is chosen to fulfill
\begin{equation}\label{Eq:xi}
\nu_o(\xi_o\boldsymbol\om)^{2-p}(\tfrac12cR)^{sp}\ge a(\tfrac14\boldsymbol\om)^{2-p}R^{sp},\quad\text{i.e.}\quad \xi_o=  \tfrac14\Big(\frac{\nu_o c^{sp}}{2^p a}\Big)^{\frac1{p-2}}.
\end{equation}
In this way, the estimate \eqref{Eq:lower-bd:1} can be claimed up to $t=0$ and yields the reduction of oscillation
\begin{equation}\label{Eq:osc1:0}
\essosc_{Q_{\frac14cR}(\theta)} u\le \big(1-\tfrac12\xi_o \big)\boldsymbol\om.
\end{equation}

Note that in the dependence of $\xi_o$, the constants $a$ and $c$ are still to be determined.
For the moment, let us suppose $c$ has been fixed. We then select $a$ to ensure \eqref{Eq:tail2:1} does not occur. In fact, since the tail is bounded by $\boldsymbol\gm\boldsymbol\om$ as before, we impose
\[
(\tfrac12c)^{\frac{sp}{p-1}}\boldsymbol\gm\boldsymbol\om\le \xi_o \boldsymbol\om\equiv \tfrac14\Big(\frac{\nu_o c^{sp}}{2^p a}\Big)^{\frac1{p-2}} \boldsymbol\om,
\]
where we have employed the selection of $\xi_o$ in \eqref{Eq:xi}. Consequently, the above display yields the relation of $a$ and $c$, that is,
\begin{equation}\label{Eq:a}
a=\frac{\nu_o}{\boldsymbol\gm}c^{\frac{sp}{p-1}}.
\end{equation}
Hence, by this choice, the estimate \eqref{Eq:tail2:1} does not occur and the reduction of oscillation \eqref{Eq:osc1:0} actually holds. 

Moreover, the assumption $a>2c^{sp}$ made at the beginning is verified, if we use \eqref{Eq:a} and further restrict $c$ by
\begin{equation}\label{Eq:c2:1.5}
a=\frac{\nu_o}{\boldsymbol\gm}c^{\frac{sp}{p-1}}>2c^{sp},\quad\text{i.e.}\quad c<\Big(\frac{\nu_o}{\boldsymbol\gm}\Big)^{sp\frac{p-1}{p-2}}.
\end{equation}
The number $a$ will be eventually fixed via \eqref{Eq:a}, once we determine $c$ in the end.

\subsection{The Second Alternative}
In this section, 
we work with $u$ as a sub-solution near its
supremum.
Suppose \eqref{Eq:1st-alt-meas} does not hold for any $\bar{t}\in\big(-a\theta R^{sp}+\theta(cR)^{sp},0\big]$. Due to \eqref{Eq:mu-pm-}, this can be rephrased as
\begin{equation*}
	\Big|\Big\{\boldsymbol\mu^+-u\ge\tfrac14 \boldsymbol\om\Big\}
	\cap 
	(0,\bar{t})+Q_{cR}(\theta)\Big|> \nu|Q_{cR}(\theta)|.
\end{equation*}
Based on this, it is not hard to find some $t_*\in\big[\bar{t}-\theta(cR)^{sp}, \bar{t}-\tfrac12\nu\theta(cR)^{sp}\big]$, such that
\begin{equation*}
	\Big|\Big\{\boldsymbol\mu^+-u(\cdot, t_*)\ge\tfrac14 \boldsymbol\om\Big\}
	\cap 
	 K_{cR} \Big|> \tfrac12\nu|K_{cR}|.
\end{equation*}
Indeed, if the above inequality were not to hold for any $s$ in the given 
interval, then 
\begin{align*}
	\Big|\Big\{\boldsymbol\mu^+-u\ge\tfrac14\boldsymbol\om\Big\}\cap 
	(0,\bar{t})+Q_{cR}(\theta)\Big|
	&=
	\int_{\bar{t}-\theta(cR)^p}^{\bar{t} -\frac12\nu\theta(cR)^{sp}}
	\Big|\Big\{\boldsymbol\mu^+-u(\cdot, s)\ge \tfrac{1}4\boldsymbol\om\Big\}
	\cap K_{cR}\Big|\,\ds\\
	&\phantom{=\,}
	+\int^{\bar{t}}_{\bar{t}-\frac12\nu\theta(cR)^{sp}}
	\Big|\Big\{\boldsymbol\mu^+-u(\cdot, s)\ge \tfrac{1}4\boldsymbol\om\Big\}
	\cap K_{cR}\Big|\,\ds\\
	&<\tfrac12\nu |K_{cR}|\theta(cR)^{sp}\big(1-\tfrac12\nu\big) +\tfrac12\nu\theta(cR)^{sp}
	|K_{cR} |\\
	&<\nu | Q_{cR}(\theta)|,
\end{align*}
which would yield a contradiction.

Starting from this measure theoretical information, we may apply Lemma~\ref{Lm:3:1} (with $\al=\tfrac12\nu$ and $\rho=cR$) to obtain $\dl$ and $\varep$ depending on the data and $\nu$, such that, for some $\xi_1\in(0,\frac14)$,
either 
\begin{equation}\label{Eq:tail3:0}
 c^{\frac{sp}{p-1}} {\rm Tail}\big( \big(\boldsymbol \mu^{+}-u\big)_{+}; Q_o\big) >\xi_1\boldsymbol \om,
\end{equation} 
 or
\begin{equation}\label{Eq:2nd-alt-meas:1}
	\Big|\Big\{
	 \boldsymbol \mu^{+}-u(\cdot, t) \ge \varep\xi_1\boldsymbol \om\Big\} \cap K_{cR} \Big|
	\ge\frac{\al}2 |K_{cR}|
	\>\mbox{ for all $t\in\big(t_*,t_*+\dl(\xi_1\boldsymbol \om)^{2-p}(cR)^{sp}\big]$.}
\end{equation}
The number $\xi_1$ is chosen to satisfy
\[
\dl(\xi_1\boldsymbol \om)^{2-p}(cR)^{sp}\ge\theta(cR)^{sp},\quad\text{i.e.}\quad\xi_1=\tfrac14\dl^{\frac1{p-2}}.
\]
In this way, the measure theoretical information \eqref{Eq:2nd-alt-meas:1} can be claimed up to the time level $\bar{t}$.
Whereas the constant $c$ is again chosen so small that \eqref{Eq:tail3:0} does not happen. A simple calculation as before gives
\begin{equation}\label{Eq:c2:1}
c\le\Big(\frac{\xi_1}{\boldsymbol \gm}\Big)^{\frac{p-1}{sp}}.
\end{equation}
Consequently, the measure theoretical information \eqref{Eq:2nd-alt-meas:1} yields
\begin{equation}\label{Eq:2nd-alt-meas:2}
	\Big|\Big\{
	 \boldsymbol \mu^{+}-u(\cdot, t) \ge \varep\xi_1\boldsymbol \om\Big\} \cap K_{cR} \Big|
	\ge\frac{\al}2 |K_{cR}|
	\>\mbox{ for all $t\in\big(-a\theta R^{sp}+\theta (cR)^{sp},0\big]$,}
\end{equation}
thanks to the arbitrariness of $\bar{t}$. 

Given \eqref{Eq:2nd-alt-meas:2}, we want to apply Lemma~\ref{Lm:3:2} with $\dl=1$, $\xi=\varep\xi_1$ and $\rho=cR$ next. 
To this end, we first let $\nu$ be fixed in Lemma~\ref{Lm:DG:1} (with $\dl=1$ ) and choose $\sig\in(0,\tfrac12)$ to satisfy
\[\boldsymbol\gm \frac{\sig^{p-1}}{\al}\le\nu.\]
Then we use \eqref{Eq:a} and restrict $c$ further to satisfy
\begin{equation}\label{Eq:c2:2}
a\theta R^{sp}-\theta (cR)^{sp}\ge(\sig\varep\xi_1\boldsymbol\om)^{2-p}(cR)^{sp},\quad\text{i.e.}\quad c\le\Big(\frac{\nu_o}{\boldsymbol\gm}\Big)^{\frac{p-1}{sp(p-2)}}(\sig\varep\xi_1)^{\frac{p-1}{sp}}.
\end{equation}
In this way, the measure theoretical information \eqref{Eq:2nd-alt-meas:2} gives that
	\begin{equation*}
	\Big|\Big\{
		\boldsymbol \mu^{+}-u(\cdot, t)\ge \varep\xi_1\boldsymbol \om
		\Big\}\cap K_{cR}\Big|
		\ge\al \big|K_{cR}\big|\quad\mbox{ for all $t\in\big( -(\sig\varep\xi_1\boldsymbol \om)^{2-p}(cR)^p, 0\big]$,}
	\end{equation*}
which allows us to implement Lemma~\ref{Lm:3:2}.
Namely, there exists
 $\boldsymbol \gm>0$ depending only on the data,  such that 
  either 
 \begin{equation}\label{Eq:tail3:1}
  c^{\frac{sp}{p-1}} {\rm Tail}\big( \big(\boldsymbol \mu^{+}-u\big)_{+}; Q_o\big) >\sig\varep\xi_1\boldsymbol\om
 \end{equation} 
 or
\begin{equation*}
	\Big|\Big\{
	\boldsymbol \mu^{+}-u \le \sig \varep\xi_1\boldsymbol \om \Big\}\cap Q_{cR}(\theta)\Big|
	\le \nu \big|Q_{cR}(\widetilde\theta)|,\quad\text{where}\>\widetilde\theta=(\sig\varep\xi_1\boldsymbol \om)^{2-p}.
\end{equation*}
By Lemma~\ref{Lm:DG:1} (with $\dl=1$), the last display yields
\[
\boldsymbol \mu^{+}-u \ge \tfrac12\sig \varep\xi_1\boldsymbol \om\quad\text{a.e. in}\>Q_{\frac12cR}(\widetilde\theta),
\]
which in turn gives the reduction of oscillation
\begin{equation}\label{Eq:osc1:1}
\essosc_{Q_{\frac12cR}(\widetilde\theta)} u\le \big(1- \tfrac12\sig \varep\xi_1\big)\boldsymbol \om.
\end{equation}

Once again, we may restrict the choice of $c$ to ensure that \eqref{Eq:tail3:1} does not happen. By a similar calculation as before, this amounts to requiring
\begin{equation}\label{Eq:c2:3}
c\le\Big(\frac{\sig\varep\xi_1}{\boldsymbol \gm}\Big)^{\frac{p-1}{sp}}.
\end{equation}

Combining \eqref{Eq:osc1:0} and \eqref{Eq:osc1:1}, we arrive at
\begin{equation}\label{Eq:osc1:2}
\essosc_{Q_{\frac14cR}( \theta)} u\le \big(1- \eta\big)\boldsymbol \om:=\boldsymbol \om_1,
\end{equation}
where
\[
\eta=\min\big\{\tfrac12\xi_o, \tfrac12\sig \varep\xi_1\big\}.
\]

Now, set $\theta_1=(\tfrac14\boldsymbol \om_1)^{2-p}$ and $R_1=\tfrac14 c R$. To prepare the induction, we need to verify the set inclusion
\[
Q_{R_1}(a\theta_1)\subset Q_{\frac14cR}( \theta),\quad\text{i.e.}\quad a\le \big[\tfrac14(1-\eta)\big]^{p-2}.
\]
This, by the choice of $a$ in \eqref{Eq:a}, amounts to further requiring the smallness of $c$.

As a result of this inclusion and  \eqref{Eq:osc1:2}, we obtain
\begin{equation*}
\essosc_{Q_{R_1}( a\theta_1)} u\le \boldsymbol \om_1,
\end{equation*}
which takes the place of \eqref{Eq:start-cylinder:1} in the next stage. Note that this oscillation estimate also takes into account the reverse case of \eqref{Eq:mu-pm-}.

\subsection{The Induction}
Now we may proceed by induction. 
 Suppose up to $i=0,1,\cdots, j$, we have built
\begin{equation*}
\left\{
	\begin{array}{c}
	\dsty R_o=R,\quad R_i=\tfrac14c R_{i-1},
	\quad 
	\boldsymbol\om_i=(1-\eta)\boldsymbol\om_{i-1},\\[5pt]
	\theta_i=(\tfrac14\boldsymbol\om_i)^{2-p},
	\quad 
	Q_i=Q_{R_i}(a\theta_i), 
	\\[5pt]
	\dsty\boldsymbol\mu_i^+=\essup_{Q_i}u,
	\quad
	\boldsymbol\mu_i^-=\essinf_{Q_i}u,
	\quad
	\essosc_{Q_i}u\le\boldsymbol\om_i.
	\end{array}
\right.
\end{equation*}
The induction argument will show that the above oscillation estimate continues to hold for the $(j+1)$-th step.

Like in the proof for $1<p\le2$, we can repeat all the previous arguments, which now are adapted with  $\boldsymbol\mu^{\pm}_j$, $\boldsymbol\om_j$, $R_j$, $\theta_j$, $Q_j$, etc. In the end, we have a reduction of oscillation parallel with \eqref{Eq:osc1:2}, that is,
\begin{equation}\label{Eq:osc1:3}
\essosc_{Q_{\frac14cR_j}( \theta_j)} u\le \big(1- \eta\big)\boldsymbol \om_j:=\boldsymbol \om_{j+1}.
\end{equation}
Now, setting $\theta_{j+1}=(\tfrac14\boldsymbol \om_{j+1})^{2-p}$ and $R_{j+1}=\tfrac14 c R_j$, it is straightforward to verify the set inclusion
\[
Q_{R_{j+1}}(a\theta_{j+1})\subset Q_{\frac14cR_j}( \theta_j), 
\]
which, by \eqref{Eq:osc1:3}, implies
\[
\essosc_{Q_{R_{j+1}}( a\theta_{j+1})} u \le \boldsymbol \om_{j+1}.
\]

 A key step lies in determining $c$ as done in \eqref{Eq:c2:0}, \eqref{Eq:c2:1}, \eqref{Eq:c2:1.5}, \eqref{Eq:c2:2} and \eqref{Eq:c2:3}, such that the alternative involving the tail, along the course of the arguments, does not really occur and hence \eqref{Eq:osc1:3} can be reached.
This hinges upon the following estimate of the tail:
\[
 {\rm Tail}\big(\big(u - \boldsymbol \mu_j^{\pm}\big)_{\pm}; Q_j\big)  \le \boldsymbol\gm\boldsymbol\om_j.
\]
The computations leading to the above tail estimate can be performed as those leading to \eqref{Eq:tail-est}. We omit the details to avoid repetition. After the number $c$ is determined independent of $j$, the number $a$ is finally chosen via the relation \eqref{Eq:a}. Hence the induction is completed and the derivation of a H\"older modulus of continuity follows. 

\appendix
\section{Fractional Sobolev Inequalities}\label{A:1}
Note that the definition of the space $W^{s,p}$ in Section~\ref{S:notion} is also valid for $p=1$, though we will not use it. An elementary proof of the following result can be retrieved from \cite[Theorem~6.5]{Hitchhiker}. 
\begin{proposition}\label{Sobolev-0}
Let $s\in(0,1)$ and $p\ge1$ satisfying $sp<N$. For any measurable and compactly supported function $u:\rn\to\rr$, there holds
\[
\Big(\int_{\rn} |u|^{\frac{Np}{N-sp}}\,\dx\Big)^{\frac{N-sp}{N}}\le C \int_{\rn}\int_{\rn}\frac{\big|u(x) - u (y)\big|^p}{|x-y|^{N+sp}}\,\dx\dy
\]
for some positive constant $C=C(s,p,N)$.
\end{proposition}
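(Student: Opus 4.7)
The plan is the classical two-step strategy for the fractional Sobolev embedding: first establish the case $p=1$ via a coarea-type identity combined with the fractional isoperimetric inequality, then reduce $p>1$ back to $p=1$ by a Maz'ya-style dyadic truncation argument. By density it suffices to treat $u\in C_c^\infty(\rn)$, and by replacing $u$ with $|u|$ one may assume $u\ge 0$.

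For the case $p=1$, the layer-cake representations $u(x)=\int_0^\infty\chi_{\{u>t\}}(x)\,\dt$ and $|u(x)-u(y)|=\int_0^\infty|\chi_{\{u>t\}}(x)-\chi_{\{u>t\}}(y)|\,\dt$ rewrite the seminorm as $\int_0^\infty\mathrm{Per}_s(\{u>t\})\,\dt$, where $\mathrm{Per}_s(E):=2\iint_{E\times E^c}|x-y|^{-N-s}\,\dx\,\dy$ is the fractional $s$-perimeter. Combining the fractional isoperimetric inequality $|E|^{(N-s)/N}\le\boldsymbol\gm\,\mathrm{Per}_s(E)$ (obtainable by Schwarz symmetrization) with Minkowski's inequality in integral form applied to the $L^{N/(N-s)}$-norm yields the claim at $p=1$.

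For $p>1$, introduce the dyadic level sets $A_k:=\{u>2^k\}$ with $a_k:=|A_k|$, and the truncations
\[
u_k(x):=\min\bigl((u(x)-2^k)_+,\,2^k\bigr),
\]
which satisfy $|u_k(x)-u_k(y)|\le|u(x)-u(y)|$ pointwise, $u_k\equiv 0$ outside $A_k$, and $u_k\equiv 2^k$ on $A_{k+1}$. Applying the $p=1$ result to $u_k$ gives $2^k\,a_{k+1}^{(N-s)/N}\le\boldsymbol\gm\,[u_k]_{W^{s,1}(\rn)}$, and a single Hölder step---splitting the weight as $|x-y|^{-N-s}=|x-y|^{-(N+sp)/p}\cdot|x-y|^{-(N+s)+(N+sp)/p}$---converts this into
\[
2^{kp}\,a_{k+1}^{p(N-s)/N}\le\boldsymbol\gm\, a_k^{p-1}\iint_{A_k\times\rn}\frac{|u(x)-u(y)|^p}{|x-y|^{N+sp}}\,\dx\,\dy.
\]
The layer-cake formula bounds $\int_{\rn}u^{p^*}\,\dx$ by $\boldsymbol\gm\sum_{k\in\zz}2^{kp^*}a_k$ with $p^*:=Np/(N-sp)$. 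Substituting the dyadic estimate above and noting that each pair $(x,y)$ contributes to only those $k$ with $2^k\le\max(u(x),u(y))$---so that the cross-sum $\sum_k\iint_{A_k\times\rn}(\cdot)$ is controlled by a single copy of the full double integral---reassembles the right side into $[u]_{W^{s,p}}^p$ up to factors involving the $a_k$. The main obstacle is the exponent bookkeeping in this final reassembly: the factors $a_k^{p-1}$ and $2^{kp^*}$ must be balanced against one another via a discrete Hölder inequality so that the resulting sum is controlled by $\bigl(\sum_k 2^{kp^*}a_k\bigr)^{1-p/p^*}\cdot[u]_{W^{s,p}}^p$, from which the claim follows by absorbing the first factor to the left-hand side.
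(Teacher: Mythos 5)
Your $p=1$ step (layer-cake plus the fractional isoperimetric inequality) is fine, but the pivotal step reducing $p>1$ to $p=1$ breaks down, in two places. First, the ``single H\"older step'': splitting $|x-y|^{-N-s}=|x-y|^{-(N+sp)/p}\cdot|x-y|^{-(N+s)+(N+sp)/p}$ and applying H\"older with exponents $p$ and $p/(p-1)$ over $A_k\times\rn$ puts on the conjugate factor the weight $|x-y|$ raised to the power $\big[-(N+s)+\tfrac{N+sp}{p}\big]\tfrac{p}{p-1}=-N$, so that factor is
\[
\Big(\int_{A_k}\int_{\rn}|x-y|^{-N}\,\dy\dx\Big)^{\frac{p-1}{p}}=\infty ,
\]
log-divergent both at $y=x$ and as $|y|\to\infty$; no factor $a_k^{p-1}$ can be extracted, and indeed no H\"older trade between the $W^{s,1}$-seminorm (weight $|x-y|^{-N-s}$) and the $W^{s,p}$-energy (weight $|x-y|^{-N-sp}$) is possible, because scaling forces exactly the borderline exponent $-N$. (A symptom of the same problem is the exponent mismatch you flag: the $p=1$ inequality applied to $u_k$ produces $a_{k+1}^{p(N-s)/N}$, while the reassembly toward $\sum_k 2^{kp^*}a_k$ requires $a_{k+1}^{p/p^*}=a_{k+1}^{(N-sp)/N}$, and these differ for $p>1$.) Second, the reassembly itself: the claim that each pair $(x,y)$ contributes to only finitely many $k$ is false, since $x\in A_k$ for every $k$ with $2^k<u(x)$, i.e.\ for infinitely many $k$; hence $\sum_{k\in\zz}\iint_{A_k\times\rn}|u(x)-u(y)|^p|x-y|^{-N-sp}\,\dx\dy$ generically diverges and is not ``a single copy'' of the Gagliardo seminorm. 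What does sum correctly is $\sum_k|u_k(x)-u_k(y)|^p\le|u(x)-u(y)|^p$ (the truncation differences telescope and have a common sign), i.e.\ $\sum_k[u_k]_{W^{s,p}}^p\le[u]_{W^{s,p}}^p$ -- but exploiting this requires an estimate for each $u_k$ at exponent $p$, not at exponent $1$.

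The standard repair of your route is Maz'ya's truncation scheme run at exponent $p$: prove directly a weak-type $(p,p^*)$ bound $t\,|\{|v|>t\}|^{1/p^*}\le C[v]_{W^{s,p}}$ (for instance by the same bathtub/rearrangement computation you use for the isoperimetric inequality, applied to $\int_{\{|v|\le t/2\}}|x-y|^{-N-sp}\,\dy$), apply it to each $u_k$ to get $2^{kp}a_{k+1}^{p/p^*}\le C[u_k]_{W^{s,p}}^p$, and then sum using the superadditivity above together with $\big(\sum_k b_k\big)^{p/p^*}\le\sum_k b_k^{p/p^*}$. Alternatively one argues for all $p\ge1$ at once on dyadic level sets, which is exactly the elementary proof of \cite[Theorem~6.5]{Hitchhiker} that the paper invokes (the paper does not reprove the proposition). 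As written, however, your intermediate dyadic inequality and the final summation both fail.
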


 It is our intention to circumvent a more advanced theory of function spaces, extension domains, etc. and keep this section as elementary as possible. 

The following local version is a direct consequence of Proposition~\ref{Sobolev-0}.
Here and in the sequel, we omit the reference to the center of a ball $K_R$.
\begin{proposition}\label{Sobolev-1}
Let $s\in(0,1)$ and $p\ge1$. For any function 
$u\in W^{s,p}(K_R)$ that is compactly supported in $K_{(1-d)R}$ with some $d\in(0,1)$, there holds
\[
\Big(\bint_{K_R} |u|^{\kappa p}\,\dx\Big)^{\frac1\kappa}\le C R^{sp} \int_{K_R}\bint_{K_R}\frac{\big|u(x) - u (y)\big|^p}{|x-y|^{N+sp}}\,\dx\dy+\frac{C}{d^{N+sp}}\bint_{K_R}|u(x)|^p\,\dx
\]
for some positive constant $C=C(\kappa, s, p, N)$,  
where
\begin{equation*}
\kappa \in \left\{
\begin{array}{cl}
\Big[1,\frac{N}{N-sp}\Big],\quad  &sp<N,\\[5pt]
[1,\infty),\quad &sp\ge N. 
\end{array}
\right.
\end{equation*}
\end{proposition}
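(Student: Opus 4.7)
The plan is to deduce Proposition~\ref{Sobolev-1} from the global version Proposition~\ref{Sobolev-0} via a zero extension, a localization of the Gagliardo seminorm, and interpolation for the non-extremal exponents. The support condition on $u$ is precisely what makes this reduction clean: it forces all ``tail'' contributions of the double integral over $\rn\times\rn$ to be controlled by the $L^p$ norm of $u$ on $K_R$ alone.

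First, since $u$ vanishes identically outside $K_{(1-d)R}$, extend $u$ by zero to all of $\rn$. In the case $sp<N$, and with the critical exponent $\kappa_*=N/(N-sp)$, an application of Proposition~\ref{Sobolev-0} to this extension gives
\[
\Big(\int_{K_R}|u|^{\kappa_* p}\dx\Big)^{1/\kappa_*} \le C\iint_{\rn\times\rn}\frac{|u(x)-u(y)|^p}{|x-y|^{N+sp}}\dx\dy.
\]
By symmetry and the support of $u$, the double integral on the right splits as
\[
\iint_{K_R\times K_R}\frac{|u(x)-u(y)|^p}{|x-y|^{N+sp}}\dx\dy \;+\; 2\int_{K_{(1-d)R}}|u(x)|^p\,\dx\int_{\rn\setminus K_R}\frac{\dy}{|x-y|^{N+sp}}.
\]

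For $x\in K_{(1-d)R}$ and $y\notin K_R$ one has $|x-y|\ge dR$, and therefore
\[
\int_{\rn\setminus K_R}\frac{\dy}{|x-y|^{N+sp}}\le \int_{|z|\ge dR}\frac{\dz}{|z|^{N+sp}}=\frac{C(N,s,p)}{(dR)^{sp}},
\]
so that the tail contribution is bounded by $C(dR)^{-sp}\|u\|_{L^p(K_R)}^p$. Dividing the resulting estimate through by $|K_R|^{1/\kappa_*}$ and using the elementary identity $|K_R|^{1-1/\kappa_*}\sim R^{sp}$ (which holds since $1-1/\kappa_*=sp/N$), one recovers the claimed inequality for $\kappa=\kappa_*$ with the sharper weight $d^{-sp}$; the weaker factor $d^{-N-sp}$ stated in the proposition is immediate because $d\in(0,1)$.

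For a general $\kappa\in[1,\kappa_*]$, a single application of H\"older's interpolation between $L^p$ and $L^{\kappa_* p}$ on $K_R$ transfers the bound from the extremal case to the desired one. The case $sp\ge N$ is handled by fixing any $\tilde s\in(0,s)$ with $\tilde s p<N$, running the argument above with $\tilde s$ in place of $s$, and controlling the resulting Gagliardo seminorm of order $\tilde s$ by a diameter-based splitting at $|x-y|\sim R$: on $\{|x-y|\le R\}$ one gains a factor $R^{(s-\tilde s)p}$ against the $s$-seminorm, and on $\{|x-y|>R\}$ the integrand is directly estimated by $R^{-N-\tilde s p}$ times $|u(x)|^p+|u(y)|^p$, feeding into the $L^p$ term. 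The main obstacle I anticipate is purely bookkeeping: tracking the exact powers of $R$ and $d$ through this last step so that the final constants match the stated form $CR^{sp}$ and $Cd^{-N-sp}$, especially in the borderline case $sp=N$ where $\tilde s$ must be chosen carefully and the interpolation exponent degenerates.
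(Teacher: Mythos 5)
Your argument is essentially the paper's own proof: zero extension plus Proposition~\ref{Sobolev-0}, splitting of the double integral with the support condition giving the tail bound by $\|u\|_{L^p(K_R)}^p$ (your sharper $d^{-sp}$ weight is fine since $d<1$), H\"older/monotonicity of averaged norms for subcritical $\kappa$, and reduction to an exponent $\tilde s$ with $\tilde s p<N$ when $sp\ge N$ via the near/far splitting of the Gagliardo seminorm. The only point to make explicit is that $\tilde s$ must be chosen in dependence of $\kappa$ (the paper takes $\bar s=\frac{(\kappa-1)N}{\kappa p}$, so that the critical exponent for $\bar s$ is exactly $\kappa$) for every $\kappa\in[1,\infty)$ whenever $sp\ge N$ --- a fixed $\tilde s$ only reaches $\kappa\le N/(N-\tilde s p)$ --- not merely in the borderline case $sp=N$; this is harmless since the constant is allowed to depend on $\kappa$.
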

\begin{proof}
It suffices to show the inequality for $R=1$, thanks to its scaling invariance in $R$.
Since $u$ is compactly supported in $K_1$,
we may view it as a function in $W^{s,p}(\rn)$ upon zero extension. This fact is verified by a simple calculation (similar to what follows). First consider the case $sp<N$.  According to Proposition~\ref{Sobolev-0}, we estimate
\begin{equation}\label{Eq:sp<N}
\begin{aligned}
\Big(\int_{\rn} &|u|^{\frac{Np}{N-sp}}\,\dx\Big)^{\frac{N-sp}{N}} \\
&\le C \int_{\rn}\int_{\rn}\frac{\big|u(x) - u (y)\big|^p}{|x-y|^{N+sp}}\,\dx\dy\\
&= C \int_{K_1}\int_{K_1}\frac{\big|u(x) - u (y)\big|^p}{|x-y|^{N+sp}}\,\dx\dy 
+ 2C\int_{K_1}\int_{\rn\setminus K_1}\frac{|u(y) |^p}{|x-y|^{N+sp}}\,\dx\dy\\
&\le C \int_{K_1}\int_{K_1}\frac{\big|u(x) - u (y)\big|^p}{|x-y|^{N+sp}}\,\dx\dy 
+ \frac{C}{d^{N+sp}}\int_{K_1}  |u(y) |^p \,\dy.
\end{aligned}
\end{equation}
In the last line, we used the fact that $y\in K_{1-d}$ (recalling $\supp u$) and $x\in\rn\setminus K_1$.
Consequently, the desired inequality for any $\kappa$ in the interval follows from an application of H\"older's inequality. Note also the constant $C$ is actually independent of $\kappa$ in this case.

Next, we consider the case $sp\ge N$. A first observation is that $W^{\bar{s},p}(K_1)\subset W^{s,p}(K_1)$ for any $0<\bar{s}\le s<1$. Quantitatively, we estimate
\begin{equation*}
\begin{aligned}
&\int_{K_1}\int_{K_1}\frac{\big|u(x) - u (y)\big|^p}{|x-y|^{N+\bar{s}p}}\,\dx\dy\\
&\quad= \int_{K_1}\int_{K_1\cap\{|x-y|<1\}}\frac{\big|u(x) - u (y)\big|^p}{|x-y|^{N+\bar{s}p}}\,\dx\dy + \int_{K_1}\int_{K_1\cap\{|x-y|\ge1\}}\frac{\big|u(x) - u (y)\big|^p}{|x-y|^{N+\bar{s}p}}\,\dx\dy\\
&\quad\le \int_{K_1}\int_{K_1}\frac{\big|u(x) - u (y)\big|^p}{|x-y|^{N+sp}}\,\dx\dy + C(\bar{s},p,N)\int_{K_1}|u(x)|^p\,\dx.
\end{aligned}
\end{equation*}
Now we select $\bar{s}=\frac{(\kappa-1)}{\kappa p}N$, which verifies $\bar{s}p<N$, and apply \eqref{Eq:sp<N} in $W^{\bar{s},p}(K_1)$, together with the above observation, to conclude.
\end{proof}

Based on Proposition~\ref{Sobolev-1}, the following parabolic imbedding is in order. It plays an essential role in proving DeGiorgi type lemmas.
\begin{proposition}\label{Sobolev-2}
Let $s\in(0,1)$, $p\ge1$ and 
\begin{equation*}
\kappa_* := \left\{
\begin{array}{cl}
 \frac{N}{N-sp} ,\quad  &sp<N,\\[5pt]
2,\quad &sp\ge N.
\end{array}
\right.
\end{equation*}
For any function
\[
u\in L^{p}\big(t_1,t_2; W^{s,p}(K_R)\big)\cap L^{\infty}\big(t_1,t_2; L^{2}(K_R)\big),
\]
which is compactly supported in $K_{(1-d)R}$ for some $d\in(0,1)$ and  for a.e. $t\in(t_1,t_2)$,
there holds 
\begin{equation*}
\begin{aligned}
\int_{t_1}^{t_2}&\int_{K_R}|u(x,t)|^{\kappa p}\,\dx\dt\\
&\le C \Big(R^{sp}\int_{t_1}^{t_2}\int_{K_R} \int_{K_R}\frac{\big|u(x,t) - u (y,t)\big|^p}{|x-y|^{N+sp}}\,\dx\dy\dt+ \frac1{d^{N+sp}}\int_{t_1}^{t_2}\int_{K_R} |u(x,t)|^p\,\dx\dt\Big)\\
&\quad\times\Big(\essup_{t_1<t<t_2}\bint_{K_R} |u(x,t)|^2\,\dx\Big)^{\frac{\kappa_* -1}{\kappa_*}},
\end{aligned}
\end{equation*}
for some positive constant $C=C(s,p,N)$,
where
\[
\kappa:=1+\frac{2(\kappa_* -1)}{p\kappa_*}.
\]
\end{proposition}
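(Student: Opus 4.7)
The plan is to prove the parabolic imbedding by freezing the time variable, performing a spatial Hölder interpolation between $L^{\kappa_* p}$ and $L^2$, invoking the elliptic estimate of Proposition~\ref{Sobolev-1} to control the high-integrability factor, and then integrating in time after extracting the time-supremum of the $L^2$ norm.

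For the interpolation step, I would fix $t$ for which $u(\cdot,t)\in W^{s,p}(K_R)$ is compactly supported in $K_{(1-d)R}$, and split $|u|^{\kappa p}=|u|^{p}\cdot|u|^{(\kappa-1)p}$. The defining relation
\[
\kappa=1+\frac{2(\kappa_*-1)}{p\kappa_*}
\]
is equivalent to $(\kappa-1)p\cdot\frac{\kappa_*}{\kappa_*-1}=2$, so Hölder's inequality with conjugate exponents $\kappa_*$ and $\kappa_*/(\kappa_*-1)$ yields
\[
\int_{K_R}|u(\cdot,t)|^{\kappa p}\,\dx\le\Big(\int_{K_R}|u(\cdot,t)|^{\kappa_* p}\,\dx\Big)^{\frac{1}{\kappa_*}}\Big(\int_{K_R}|u(\cdot,t)|^{2}\,\dx\Big)^{\frac{\kappa_*-1}{\kappa_*}}.
\]

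To the first factor I would apply Proposition~\ref{Sobolev-1} with the exponent there equal to $\kappa_*$, which is admissible in both branches (one takes $\kappa_*=N/(N-sp)$ if $sp<N$ and $\kappa_*=2$, which is a permissible choice in $[1,\infty)$, when $sp\ge N$). After converting the normalized averages in Proposition~\ref{Sobolev-1} to ordinary integrals, the factor $|K_R|^{1/\kappa_*}$ appearing on the left combines with the factor $|K_R|^{(\kappa_*-1)/\kappa_*}$ that arises when rewriting the $L^2$-norm as an average into a single $|K_R|$, which cancels the $|K_R|^{-1}$ hidden in the averaged integrals on the right of Sobolev-1. The resulting pointwise-in-$t$ estimate is
\[
\int_{K_R}|u(\cdot,t)|^{\kappa p}\,\dx\le C\Big(\bint_{K_R}|u(\cdot,t)|^{2}\,\dx\Big)^{\frac{\kappa_*-1}{\kappa_*}}\Big[R^{sp}\int_{K_R}\int_{K_R}\frac{|u(x,t)-u(y,t)|^{p}}{|x-y|^{N+sp}}\,\dx\dy+\frac{1}{d^{N+sp}}\int_{K_R}|u(\cdot,t)|^{p}\,\dx\Big].
\]
Replacing the $L^2$ factor by its essential supremum over $t\in(t_1,t_2)$ and integrating the bracket in $t$ produces exactly the stated inequality.

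No step presents a genuine analytic obstacle; the embedding is essentially a Gagliardo--Nirenberg-style interpolation baked into the elliptic Sobolev estimate already in hand. The only items that require care are (i) verifying that the Hölder exponents produced by the formula for $\kappa$ match the admissible range of $\kappa_*$ in both sub-cases of Proposition~\ref{Sobolev-1}, and (ii) tracking the $|K_R|$ normalization factors so that the mixture of averaged and unaveraged integrals arising from Sobolev-1 and the $L^2$ term assemble cleanly into the ordinary integrals that appear in the conclusion.
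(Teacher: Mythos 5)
Your proposal is correct and follows essentially the same route as the paper: split $|u|^{\kappa p}=|u|^p|u|^{(\kappa-1)p}$, apply H\"older with exponents $\kappa_*$ and $\kappa_*/(\kappa_*-1)$ (noting $(\kappa-1)p\,\kappa_*/(\kappa_*-1)=2$), bound the $L^{\kappa_* p}$ factor via Proposition~\ref{Sobolev-1} with exponent $\kappa_*$, and take the essential supremum in time of the $L^2$ factor before integrating. The only cosmetic difference is that the paper reduces to $R=1$ by scaling, while you track the $|K_R|$ normalizations directly; your bookkeeping (the exponents $\tfrac1{\kappa_*}-1+\tfrac{\kappa_*-1}{\kappa_*}=0$) is accurate.
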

\begin{proof} 
The inequality is scaling invariant in $R$. Hence, it suffices to show it for $R=1$.
An application of H\"older's inequality, followed by Proposition~\ref{Sobolev-1} and the choice of $\kappa$, yields that
\begin{align*}
\int_{t_1}^{t_2}&\int_{K_1}|u |^{\kappa p}\,\dx\dt=\int_{t_1}^{t_2}\int_{K_1}|u |^{  p}|u |^{(\kappa-1) p}\,\dx\dt\\
&\le\int_{t_1}^{t_2}\Big(\int_{K_1}|u |^{\kappa_* p}\,\dx\Big)^{\frac1{\kappa_*}}
\Big(\int_{K_1}|u |^{p\frac{\kappa_* (\kappa -1)}{\kappa_*- 1}}\,\dx\Big)^{\frac{1-\kappa_*}{\kappa_*}}\dt \\
&\le C \Big( \int_{t_1}^{t_2}\int_{K_1} \int_{K_1}\frac{\big|u(x,t) - u (y,t)\big|^p}{|x-y|^{N+sp}}\,\dx\dy\dt+ \frac1{d^{N+sp}}\int_{t_1}^{t_2}\int_{K_1} |u(x,t)|^p\,\dx\dt\Big)\\
&\quad\times\Big(\essup_{t_1<t<t_2}\int_{K_1} |u(x,t)|^2\,\dx\Big)^{\frac{\kappa_* -1}{\kappa_*}}.
\end{align*}
This finishes the proof.
\end{proof}

\section{Time Mollification}\label{A:2}
The time derivative of a weak solution  in general  does not exist in the Sobolev sense. On the other hand, it is desirable to use the solution in a testing function when we derive the energy estimate in Proposition~\ref{Prop:2:1}. To overcome this well-organized difficulty, we find the following mollifier quite convenient. 
Namely, we introduce for any $v\in L^1(E_T)$,
\begin{equation*}
	\llbracket v \rrbracket_h(x,t)
	:= 
	\tfrac 1h \int_0^t \mathrm e^{\frac{\tau-t}h} v(x,\tau) \, \d \tau,\quad
	\llbracket v \rrbracket_{\bar{h}}(x,t)
	:= 
	\tfrac 1h \int_t^T \mathrm e^{\frac{t-\tau}h} v(x,\tau) \, \d \tau.
\end{equation*}
Properties of this mollification can for instance be found in \cite[Lemma~2.2]{KL}.
In particular, by direct differentiation, we obtain the following identities:
\begin{equation}\label{dt-mol}
	 \partial_t \llbracket v \rrbracket_h
	 =
	 \tfrac{1}{h} \big(v-\llbracket v \rrbracket_h\big), \quad
	  \partial_t \llbracket v \rrbracket_{\bar{h}}
	 =
	 \tfrac{1}{h} \big(\llbracket v \rrbracket_{\bar{h}}- v\big).
\end{equation}
They are paired by the identity
\[
\int_0^T \partial_t \llbracket v \rrbracket_h\, \phi\,\dt =- \int_0^T \llbracket v \rrbracket_{\bar{h}} \, \partial_t\phi\,\dt\quad\text{for all}\>\phi\in C_o^1\big((0,T)\big).
\]
Another fact, we will reply on is that, if $v\in L^q(E_T)$ for some $q\ge1$, then $\llbracket v \rrbracket_h,\, \llbracket v \rrbracket_{\bar{h}}\in L^q(E_T)$, and moreover, as $h\to0$ we have
\begin{equation}\label{Lp-conv}
\llbracket v \rrbracket_h,\, \llbracket v \rrbracket_{\bar{h}}\to v\quad\text{in}\> L^q(E_T).
\end{equation}

Now we take on a rigorous justification of formal calculations in Proposition~\ref{Prop:2:1}.
For ease of notation, we denote
\[
\mathcal{A}\equiv K(x,y,t)\big|u(x,t) - u(y,t)\big|^{p-2} \big(u(x,t) - u(y,t)\big).
\]
Let the cutoff function $\z$ be as in Proposition~\ref{Prop:2:1} and introduce $\psi_\varep(t),\,\varep>0$ to be a
Lipschitz approximation of $\chi_{\{t_o-S<t<t_o\}}$. Namely, $\psi_\varep=1$ in $(t_o-S+\varep, t_o-\varep)$, $\psi_\varep=0$ outside $(t_o-S,t_o)$, while linearly interpolated otherwise. In the weak formulation \eqref{Eq:1:4p}, we take the testing function
\[
\vp_h=\big(\llbracket u \rrbracket_{\bar{h}}-k\big)_+\z^p\psi_{\varep};
\]
we obtain
\begin{equation}\label{Eq:A:1}
-\int_0^T\int_E u \pl_t \vp_h\,\dx\dt 
+\int_0^T\int_{\rn}\int_{\rn} \mathcal{A} \cdot \big(\vp_h(x,t)-\vp_h(y,t)\big)\,\dy\dx\dt\le0.
\end{equation}

The first term in \eqref{Eq:A:1} can be written as, recalling \eqref{dt-mol},
\begin{align*}
-\int_0^T\int_E u \pl_t \vp_h\,\dx\dt &= -\int_0^T\int_E \llbracket u \rrbracket_{\bar{h}} \pl_t \vp_h\,\dx\dt
+\int_0^T\int_E \big(\llbracket u \rrbracket_{\bar{h}} - u\big) \pl_t \vp_h\,\dx\dt\\
&=\tfrac12\int_0^T\int_E  \pl_t\big(\llbracket u \rrbracket_{\bar{h}} -k\big)_+^2 \z^p\psi_{\varep}\,\dx\dt\\
&\quad +\tfrac1h\int_0^T\int_E \big(\llbracket u \rrbracket_{\bar{h}} - u\big)^2 \chi_{\{\llbracket u \rrbracket_{\bar{h}}>k\}}\z^p\psi_{\varep}\,\dx\dt\\
&\quad + \int_0^T\int_E \big(\llbracket u \rrbracket_{\bar{h}} - u\big)\big(\llbracket u \rrbracket_{\bar{h}} -k\big)_+ \pl_t\big(\z^p\psi_{\varep}\big)\,\dx\dt.
\end{align*}
On the right-hand side of the last display, the second term is discarded due to its non-negative contribution, whereas the third term converges to $0$ as $h\to0$ owing to \eqref{Lp-conv}.
For the first term, we integrate by parts in time, let $h\to0$ and then let $\varep\to0$, to obtain
\[
\tfrac12\int_{K_R}\big(u-k\big)_+^2\z^p\,\dx\bigg|_{t_o-S}^{t_o} - \tfrac12\iint_{Q(R,S)}\big(u-k\big)_+^2\pl_t\z^p\,\dx\dt.
\]
These are the last terms on the left/right-hand side of the energy estimate in Proposition~\ref{Prop:2:1}.

The next goal is to show that the second integral in \eqref{Eq:A:1} converges to
\[
\int_0^T\int_{\rn}\int_{\rn} \mathcal{A} \cdot \big(\vp(x,t)-\vp(y,t)\big)\,\dy\dx\dt,
\]
if we send $h\to0$, where
\[
\vp=\big( u -k\big)_+\z^p\psi_{\varep}.
\]
Once this is shown, we will justify the formal calculations in the proof of Proposition~\ref{Prop:2:1}.

For this purpose, we take the difference between the two integrals and split the obtained integral into two parts, that is,
\begin{align*}
&I_1:= \int_0^T\int_{K_{2R}}\int_{K_{2R}}\mathcal{A} \cdot \Big(\big[\vp_h(x,t)-\vp_h(y,t)\big] - \big[\vp(x,t)-\vp(y,t)\big]\Big)\,\dy\dx\dt,\\
&I_2:=2\int_0^T\int_{K_{2R}}\int_{\rn\setminus K_{2R}}\mathcal{A} \cdot \big[\vp_h(x,t)- \vp(x,t) \big]\,\dy\dx\dt.
\end{align*}
Note that we used $\vp_h(y,t)=\vp(y,t)=0$ for $y\in \rn\setminus K_{2R}$ in defining $I_2$.
We then show that $I_1$ and $I_2$ converge to $0$ after sending $h\to0$. 

For $I_1$, we first claim that
\begin{equation}\label{Eq:A:uniform-bd}
\int_0^T\int_{K_{2R}}\int_{K_{2R}} \frac{ \big|\vp_h(x,t)-\vp_h(y,t)\big|^p}{|x-y|^{N+sp}}\,\dy\dx\dt \le C   \|u\|^p_{L^p(0,T; W^{s,p}(K_{2R}))},
\end{equation}
for some constant $C=C(s,p,N, R, \|D\z\|_{\infty} )$.
This uniform bound allows us to extract a weakly convergent subsequence (still labeled by $h$). Since, up to a subsequence, $\vp_h\to\vp$ a.e. in $E_T$, we obtain that, up to a subsequence,
\[
 \frac{ \vp_h(x,t)-\vp_h(y,t)}{|x-y|^{\frac{N}{p}+s}} \rightharpoonup  \frac{ \vp(x,t)-\vp(y,t)}{|x-y|^{\frac{N}{p}+s}}\quad\text{weakly in}\>L^p\big((0,T)\times K_{2R}\times K_{2R}\big).
\]
Consequently, we have $I_1\to0$ in view of this weak convergence and the fact that
\[
\mathcal{A}\cdot |x-y|^{\frac{N}{p}+s}\in L^{\frac{p}{p-1}}\big((0,T)\times K_{2R}\times K_{2R}\big).
\]

To prove \eqref{Eq:A:uniform-bd},  the numerator of the integrand needs to be controlled. Using the triangle inequality and the fact that $\z$ and $\psi_\varep$ are bounded by $1$, we compute
\[
\big|\vp_h(x,t)-\vp_h(y,t)\big|^p\le c \big|\llbracket u \rrbracket_{\bar{h}}(x,t) - \llbracket u \rrbracket_{\bar{h}}(y,t)\big|^p  + c\big| \llbracket u \rrbracket_{\bar{h}}(y,t)\big|^p \big|\z(x,t)-\z(y,t)\big|^p
\]
for some proper $c=c(p)$.
Next, we estimate the first term on the right-hand side of the last display by H\"older's inequality:
\begin{align*}
\big|\llbracket u \rrbracket_{\bar{h}}(x,t) - \llbracket u \rrbracket_{\bar{h}}(y,t)\big|^p&\le \bigg(\tfrac 1h \int_t^T \mathrm e^{\frac{t-\tau}h}  \, \d \tau\bigg)^{p-1}\bigg(\tfrac 1h \int_t^T \mathrm e^{\frac{t-\tau}h}  \big| u(x,\tau) - u(y,\tau)\big|^p \, \d \tau\bigg)\\
&\le \tfrac 1h \int_t^T \mathrm e^{\frac{t-\tau}h}  \big| u(x,\tau) - u(y,\tau)\big|^p \, \d \tau.
\end{align*}
A time integration of the above estimate, followed by Fubini's theorem, gives that
\begin{align*}
\int_0^T \big|\llbracket u \rrbracket_{\bar{h}}(x,t) - \llbracket u \rrbracket_{\bar{h}}(y,t)\big|^p\,\dt&\le \int_0^T\big(1-\mathrm e^{-\frac{t}{h}}\big) \big| u(x,t) - u(y,t)\big|^p\,\dt\\
&\le \int_0^T  \big| u(x,t) - u(y,t)\big|^p\,\dt.
\end{align*}
Thus we deduce that
\begin{equation}\label{Eq:A:4}
\int_0^T\int_{K_{2R}}\int_{K_{2R}} \frac{\big|\llbracket u \rrbracket_{\bar{h}}(x,t) - \llbracket u \rrbracket_{\bar{h}}(y,t)\big|^p}{|x-y|^{N+sp}}\,\dy\dx\dt \le 
  \|u\|^p_{L^p(0,T; W^{s,p}(K_{2R}))}.
\end{equation}
Analogously, for the second term, we may first estimate 
\begin{align*}
\int_0^T \big|  \llbracket u \rrbracket_{\bar{h}}(y,t)\big|^p\,\dt \le \int_0^T  \big| u(y,t)\big|^p\,\dt,
\end{align*}
and then,
\begin{equation}\label{Eq:A:5}
\begin{aligned}
\int_0^T&\int_{K_{2R}}\int_{K_{2R}} \frac{\big| \llbracket u \rrbracket_{\bar{h}}(y,t)\big|^p \big|\z(x,t)-\z(y,t)\big|^p}{|x-y|^{N+sp}}\,\dy\dx\dt \\
&\le\|D\z\|^p_{\infty} \int_0^T\int_{K_{2R}}\int_{K_{2R}} \frac{\big|\llbracket u \rrbracket_{\bar{h}}(y,t)\big|^p |x-y|^p}{|x-y|^{N+sp}}\,\dy\dx\dt \\
&\le \|D\z\|^p_{\infty} \int_0^T\int_{K_{2R}}\int_{K_{2R}} \frac{\big|u(y,t)\big|^p  }{|x-y|^{N+(s-1)p}}\,\dy\dx\dt\\
&\le \boldsymbol\gm  \|D\z\|^p_{\infty}  R^{(1-s)p} \|u\|^p_{L^p(0,T; L^{p}(K_{2R}))}.
\end{aligned}
\end{equation}
Combining \eqref{Eq:A:4} and \eqref{Eq:A:5}, we have shown \eqref{Eq:A:uniform-bd}.

To deal with $I_2$, we first notice  that $|x-y|\ge\tfrac12 |y|$ for $y\in \rn\setminus K_{2R}$ and $x\in  K_{R}$. Hence, we estimate
\begin{align*}
| I_2 | &\le 2 \int_0^T\int_{K_{2R}}\int_{\rn\setminus K_{2R}} \Big| \mathcal{A} \cdot \big(\vp_h(x,t)-\vp(x,t)\big) \Big| \,\dy\dx\dt\\
& \le 2C_1 \int_0^T\int_{K_{2R}}\int_{\rn\setminus K_{2R}}  \frac{|u(x,t) - u(y,t)|^{p-1}}{|x-y|^{N+sp}} \big| \vp_h(x,t)-\vp(x,t)  \big| \,\dy\dx\dt\\
& \le \boldsymbol\gm \int_0^T\int_{K_{2R}}  \big|  \vp_h(x,t)-\vp(x,t)  \big| \,\dx\dt\int_{\rn\setminus K_{2R}}  \frac{|u(x,t)|^{p-1} + |u(y,t)|^{p-1}}{|y|^{N+sp}}\,\dy\\
& \le \frac{\boldsymbol\gm}{R^{sp}}  \int_0^T\int_{K_{R}}  \big|  \vp_h(x,t)-\vp(x,t)  \big| \Big( |u(x,t)|^{p-1} + \big[{\rm Tail}\big(u; Q(R,S)\big)\big]^{p-1}\Big)\,\dx\dt\\
& \le \frac{\boldsymbol\gm}{R^{sp}} \bigg[\iint_{Q(R,S)}  \big|  \vp_h -\vp   \big|^p\,\dx\dt\bigg]^{\frac1p}
 \bigg[\iint_{Q(R,S)} \Big(|u |^{p } + \big[{\rm Tail}\big(u; Q(R,S)\big)\big]^{p}\Big)\,\dx\dt\bigg]^{\frac{p-1}{p}}.
\end{align*}
Now we easily see that $I_2\to0$ as $h\to0$ due to \eqref{Lp-conv}.



\end{document}